\documentclass[12pt]{article}
\usepackage{amsthm,amsfonts,amssymb,amsmath}
\usepackage{stmaryrd}
\usepackage{cite,hyperref}
\usepackage{epsfig}
\usepackage{url}
\usepackage{footmisc}
\usepackage{xcolor,tikz}
\usetikzlibrary{positioning, arrows.meta,calc}
\usetikzlibrary{matrix}
\usepackage{multicol,graphicx}
\usepackage{fullpage}
\usepackage{bbm}
\usetikzlibrary{decorations}
\usepackage{svg}

\usepackage[shortlabels]{enumitem}

\numberwithin{equation}{section}

\newtheorem{thm}[equation]{Theorem}

\newtheorem{lem}[equation]{Lemma}
\newtheorem{prop}[equation]{Proposition}

\theoremstyle{definition}
\newtheorem{defn}[equation]{Definition}

\newtheorem{ex}[equation]{Example}

\newtheorem*{ack}{Acknowledgements}

\theoremstyle{remark}
\newtheorem{case}{Case}
\newtheorem{casee}{Case}

\newtheorem{rem}[equation]{Remark}

\title{Maximizing Alternating Paths via Entropy}
\author{Hao Chen\thanks{School of Mathematical Sciences, University of Science and Technology of China, Hefei, Anhui, 230026, China. E-mail: {\tt mathsch@mail.ustc.edu.cn}. This work was completed while the first author was visiting the University of Victoria. Research supported by National Key Research and Development Program of China 2023YFA1010201, National Natural Science Foundation of China grant 12125106 and China Scholarship Council No. 202406340066.} \and Felix Christian Clemen\thanks{Department of Mathematics and Statistics, University of Victoria, Victoria, B.C., Canada.}$\text{ }^{,}$\thanks{E-mail: {\tt fclemen@uvic.ca}. Research supported by a PIMS Postdoctoral Fellowship. } \and Jonathan A. Noel\footnotemark[2]$\text{ }^{,}$\thanks{E-mail: {\tt noelj@uvic.ca}. Research supported by NSERC Discovery Grant RGPIN-2021-02460.}}

\DeclareTextCompositeCommand{\v}{OT1}{l}{l\nobreak\hspace{-.1em}'}
\DeclareTextCompositeCommand{\v}{OT1}{t}{t\nobreak\hspace{-.1em}'\nobreak\hspace{-.15em}}

\DeclareMathOperator{\Hom}{Hom}

\DeclareMathOperator{\rng}{rng}

\begin{document}

\maketitle

\begin{abstract}
We prove that if $G$ is an $n$-vertex graph whose edges are coloured with red and blue, then the number of colour-alternating walks of length $2k+1$ with $k+1$ red edges and $k$ blue edges is at most $k^k(k+1)^{k+1}(2k+1)^{-2k-1}n^{2k+2}$. This solves a problem that was recently posed by Basit, Granet, Horsley, K\"undgen and Staden. Our proof involves an application of the entropy method.
\end{abstract}

\section{Introduction}

Let $H$ be a finite simple undirected graph whose edges are coloured with two colours, say red and blue; we refer to such a structure as an \emph{edge-coloured graph} and note that, in this paper, we always deal with edge-colourings using only red and blue unless stated otherwise. A natural extremal problem, raised recently by Basit et al.~\cite{Basit+25+} and known as the \emph{semi-inducibility problem}, asks for the maximum possible number of copies of an edge-colored graph $H$ in an edge-coloured graph $G$ on $n$ vertices. 

In addition to defining the problem and solving it for several small examples and infinite families of edge-coloured graphs, the authors of~\cite{Basit+25+} raised some intriguing problems regarding edge-coloured paths and cycles. For $k\geq1$, the \emph{alternating path} $P_k^A$ is the edge-coloured graph consisting of a path with $k+1$ vertices $v_0,v_1,\dots,v_k$ where pairs of consecutive vertices are adjacent, every vertex is incident to at most one edge of each colour and the first edge $v_0v_1$ is red. The semi-inducibility problem for alternating paths of even length was solved in~\cite[Theorem~1.3]{Basit+25+}; see Theorem~\ref{thm:even} below. Our main result (Theorem~\ref{thm:paths}) settles the case of odd alternating paths; in particular, we solve~\cite[Problem~9.2]{Basit+25+}. 

Before stating these theorems, we require a few definitions. For two edge-coloured graphs $H$ and $G$, a \emph{homomorphism} from $H$ to $G$ is a mapping $\varphi:V(H)\to V(G)$ with the property that, for every edge $uv$ of $H$, $f(u)f(v)$ is an edge of $G$ of the same colour as $uv$. We let $\Hom(H,G)$ denote the set of all such homomorphisms and $\hom(H,G)$ be $|\Hom(H,G)|$. The \emph{homomorphism density} of $H$ in $G$ is then defined to be
\[t(H,G):=\frac{\hom(H,G)}{v(G)^{v(H)}}\]
where $v(F)$ denotes the number of vertices of an edge-coloured graph $F$. In other words, $t(H,G)$ is the probability that a uniformly random map from $V(H)$ to $V(G)$ is a homomorphism. The result of~\cite{Basit+25+} on alternating paths of even length can be phrased as follows. 

\begin{thm}[Basit et al.~{\cite[Theorem~1.3]{Basit+25+}}]
\label{thm:even}
For every $k\geq1$ and edge-coloured graph $G$,
\[t(P_{2k}^A,G)\leq (1/2)^{2k}.\]
\end{thm}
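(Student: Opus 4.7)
The plan is to prove Theorem~\ref{thm:even} via the entropy method. Let $X = (X_0, X_1, \ldots, X_{2k})$ be a uniformly random element of $\Hom(P_{2k}^A, G)$, so that $H(X) = \log \hom(P_{2k}^A, G)$. The target is the sharp entropy bound $H(X) \le (2k+1)\log n - 2k\log 2$, which upon exponentiating and dividing by $n^{2k+1}$ gives $t(P_{2k}^A, G) \le (1/2)^{2k}$.

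I would apply the entropy chain rule from both ends of the walk, using that conditioning reduces entropy so that conditioning on only $X_{i-1}$ (or $X_{i+1}$) is no tighter than on the entire history, to obtain
\[H(X) \le H(X_0, X_1) + \sum_{i=2}^{2k} H(X_i \mid X_{i-1}) \quad\text{and}\quad H(X) \le H(X_{2k-1}, X_{2k}) + \sum_{i=0}^{2k-2} H(X_i \mid X_{i+1}).\]
Averaging these and re-indexing so that the conditioning variable in each sum is a common intermediate vertex $X_j$ yields
\[2H(X) \le H(X_0, X_1) + H(X_{2k-1}, X_{2k}) + \sum_{j=1}^{2k-1}\bigl[H(X_{j-1}\mid X_j) + H(X_{j+1}\mid X_j)\bigr].\]
Crucially, the two conditional terms at each interior $j$ correspond to the two edges of the walk incident to $X_j$, and because $P_{2k}^A$ is properly edge-coloured these edges have opposite colours---one red and one blue.

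The next step is to estimate each entropy. The pair $(X_0, X_1)$ is an oriented red edge and $(X_{2k-1}, X_{2k})$ is an oriented blue edge, giving $H(X_0,X_1) \le \log(2e_R)$ and $H(X_{2k-1},X_{2k}) \le \log(2e_B)$, where $e_R, e_B$ denote the numbers of red and blue edges. For each interior conditional, combining the elementary entropy--support bound $H(X_{j+1} \mid X_j = v) \le \log d_R(v)$ (when the edge is red) with Jensen's inequality gives $H(X_{j+1}\mid X_j) \le \log \mathbb{E}[d_R(X_j)]$, and analogously $\log \mathbb{E}[d_B(X_j)]$ for blue edges; so each bracketed interior contribution is at most $\log \mathbb{E}[d_R(X_j)] + \log \mathbb{E}[d_B(X_j)]$.

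The proof closes with two applications of AM-GM. Because $G$ is simple, $d_R(v) + d_B(v) \le n-1$ at every vertex, so $\mathbb{E}[d_R(X_j)] + \mathbb{E}[d_B(X_j)] \le n$ and AM-GM gives $\mathbb{E}[d_R(X_j)]\,\mathbb{E}[d_B(X_j)] \le n^2/4$. Similarly $e_R + e_B \le \binom{n}{2}$ yields $4 e_R e_B \le n^4/4$. Substituting everything,
\[2 H(X) \le \log(n^4/4) + (2k-1)\log(n^2/4) = (4k+2)\log n - 4k\log 2,\]
and dividing by $2$ gives the theorem. The step requiring the most care is the symmetric forward/backward chain-rule set-up: it is this symmetry that ensures every interior vertex contributes exactly one red and one blue conditional-entropy term, which is what allows the vertex-wise AM-GM to save the factor of $4$ per step.
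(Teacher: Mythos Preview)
Your proof is correct. The forward/backward chain-rule averaging, the colour-pairing observation at each interior vertex, the Jensen step, and the two AM--GM estimates all check out, and the arithmetic at the end is right.

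The paper does not prove Theorem~\ref{thm:even} directly (it is quoted from~\cite{Basit+25+}), but at the end of Section~\ref{sec:sketch} it sketches an alternative proof via the same machinery used for odd paths: build an auxiliary forest $H_{2k}$ (Figure~\ref{fig:H2k}) by attaching one red and one blue pendant to every interior vertex of $P_{2k}^A$ and adding one isolated edge of each colour, then sandwich $t(P_{2k}^A,G)$ between two expressions involving $t(H_{2k},G)$ using Lemma~\ref{lem:entropy} for the lower bound and a degree-counting argument for the upper bound. Your argument is genuinely different and more direct: you never introduce an auxiliary graph or invoke the Distribution Gluing Lemma, and instead exploit the forward/backward symmetry of the chain rule so that each interior vertex contributes one red and one blue conditional term, making the vertexwise AM--GM immediate. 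The paper's route has the advantage of being an instance of a general framework that also handles the (harder) odd case uniformly; your route has the advantage of being self-contained and shorter for the even case specifically.
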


Theorem~\ref{thm:even} can be seen to be asymptotically tight by letting $G$ be an $n$-vertex edge-coloured graphs in which every vertex is incident to approximately $n/2$ edges of each colour. We determine the semi-inducibility of odd alternating paths.

\begin{thm}
\label{thm:paths}
For every $k\geq1$ and edge-coloured graph $G$,
\[t(P_{2k+1}^A,G)\leq k^k(k+1)^{k+1}(2k+1)^{-2k-1}.\]
\end{thm}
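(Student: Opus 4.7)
My plan is to apply the entropy method to a uniformly random homomorphism $\varphi\colon V(P_{2k+1}^A)\to V(G)$, writing $X_i = \varphi(v_i)$ so that $H(X_0,\ldots,X_{2k+1}) = \log\hom(P_{2k+1}^A,G)$. The task reduces to upper bounding this joint entropy by $(2k+2)\log n + \log\bigl[k^k(k+1)^{k+1}(2k+1)^{-2k-1}\bigr]$.

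The key structural observation is that $(X_0,\ldots,X_{2k+1})$ is a Markov chain, because conditioning on $X_i$ decouples the homomorphism constraints on the two sub-paths through $v_i$ (the transition $P(X_{i+1}\mid X_0,\ldots,X_i)$ depends only on $X_i$ and the number of extensions from a putative value of $X_{i+1}$, and the latter depends only on the right sub-path). Combined with the chain rule, this yields the identity
\[ H(X_0,\ldots,X_{2k+1}) \;=\; \sum_{i=0}^{2k} H(X_i, X_{i+1}) \;-\; \sum_{i=1}^{2k} H(X_i). \]

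Next I would bound each pair entropy via the chain rule $H(X_i,X_{i+1}) = H(X_i) + H(X_{i+1}\mid X_i)$, using the support bound $H(X_{i+1}\mid X_i) \leq \mathbb{E}[\log d_{c_i}(X_i)]$, where $c_i\in\{R,B\}$ is the colour of the $i$th edge and $d_c(v)$ is the colour-$c$ degree of $v$. Taking a convex combination of this forward bound and the symmetric backward bound, chosen so that the $H(X_j)$ terms cancel against $-\sum H(X_j)$ from the Markov identity, one arrives at an estimate
\[ \log\hom(P_{2k+1}^A,G) \;\leq\; \log n \;+\; \sum_{j=0}^{2k+1}\bigl(\alpha_{R,j}\,\mathbb{E}[\log r(X_j)] + \alpha_{B,j}\,\mathbb{E}[\log b(X_j)]\bigr), \]
with non-negative weights whose totals are $k+1$ on red and $k$ on blue. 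Applying Jensen to get $\mathbb{E}[\log d_c(X_j)]\leq \log\mathbb{E}[d_c(X_j)]$, then weighted AM-GM with the vertex constraint $r(v)+b(v)\leq n-1$, the maximum of the resulting expression equals the target $\log\bigl[k^k(k+1)^{k+1}(2k+1)^{-2k-1}n^{2k+2}\bigr]$.

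The main obstacle is obtaining the correct asymmetric weighting $(k+1,k)$. A position-constant convex combination of forward and backward chain rules treats red and blue symmetrically at each internal vertex, giving only the weaker bound $(1/4)^k$ matching the even-length case of Theorem~\ref{thm:even}. Breaking this symmetry to reach the sharper odd-length constant will require either a position-dependent weighting, or mixing in the direct bound $H(X_i,X_{i+1})\leq \log(2e_{c_i})$ at a well-chosen subset of pairs, in such a way that the internal $H(X_j)$ terms still cancel cleanly. Carefully balancing the cancellation against the asymmetric accumulation of red versus blue log-degrees is where the technical heart of the argument should lie.
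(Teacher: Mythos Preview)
Your entropy framework is sound through the Markov identity, and you have correctly located the obstruction. However, neither of your proposed fixes closes the gap once $k\ge 2$. Writing each pair term as $\lambda_i\bigl[H(X_i)+H(X_{i+1}\mid X_i)\bigr]+(1-\lambda_i)\bigl[H(X_{i+1})+H(X_i\mid X_{i+1})\bigr]$, exact cancellation of $H(X_j)$ at each internal $j$ forces $\lambda_j=\lambda_{j-1}$, hence $\lambda$ constant, and you recover only $(1/2)^{2k}$. Mixing in the edge-count bound $H(X_i,X_{i+1})\le\log(2e_{c_i})$ with an extra weight $\mu_i\ge 0$ relaxes the cancellation to $\lambda_j=\lambda_{j-1}+\mu_{j-1}$, so $(\lambda_j)$ must be \emph{nondecreasing}. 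But tightness of the per-vertex weighted AM--GM at the extremal colouring requires $\lambda_j=k/(2k+1)$ at odd $j$ and $(k+1)/(2k+1)$ at even $j$, which oscillates for $k\ge 2$. (For $k=1$ the required sequence $\lambda_1=\tfrac13,\ \lambda_2=\tfrac23$ \emph{is} monotone, and your scheme then does deliver the sharp $4/27$; the obstruction only appears from $k=2$ on.) Any slack in the cancellation is fatal, since on the extremal graph every $H(X_j)$ equals $\log n$.

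The paper's resolution runs entropy in a different direction. It builds an auxiliary edge-coloured forest $H_{2k+1}$ from $P_{2k+1}^A$ by attaching to each internal $v_j$ carefully chosen numbers of red and blue pendant leaves \emph{and} pendant two-edge paths of colours $RB$ and $BB$, together with isolated edges of each colour. Entropy is used to prove $t(P_{2k+1}^A,G)^{1/e(P)}\le t(H_{2k+1},G)^{1/e(H)}$, via a homomorphism $H_{2k+1}\to P_{2k+1}^A$ that covers every vertex and edge the same number of times; then elementary counting---using both $d_R(v)+d_B(v)\le n-1$ and the additional relation $d_{RB}(v)+d_{BB}(v)\le 2e_B(G)$ for two-step walk counts---gives $t(H_{2k+1},G)\le C\cdot t(P_{2k+1}^A,G)$ with the target constant $C$. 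Combining the two inequalities yields the theorem. The two-path gadgets supply precisely the extra degree of freedom your direct chain-rule decomposition lacks, and even with them the multiplicities are far from obvious: the paper finds them by solving a linear feasibility program and then records closed forms case-by-case in $k\bmod 4$. So the ``careful balancing'' you anticipate is real, but it cannot be carried out with single-edge conditional entropies and global edge counts alone.
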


Theorem~\ref{thm:paths} can be seen to be asympotically tight by taking $G$ to be an edge-coloured graph on $n$ vertices in which every vertex is incident to approximately $\frac{k+1}{2k+1}n$ red edges and approximately $\frac{k}{2k+1}n$ blue edges. While the semi-inducibility problem was originally phrased in~\cite{Basit+25+} in terms of ``copies'' of $H$ in $G$ as opposed to homomorphisms from $H$ to $G$, it is not hard to see that these two formulations are roughly equivalent (up to a constant factor and lower order asymptotic terms). We focus on homomorphisms here as we find them more convenient to deal with (e.g. it is useful that homomorphisms need not be injective, and that counting them does not require accounting for the number of ``symmetries'' of $H$).

In the next section, we provide an informal sketch of some of the key ideas of the proof, focusing mainly on the example of the 3-edge and 5-edge alternating paths. We also reduce Theorem~\ref{thm:paths} to proving a particular lemma about a special forest which we call $H_{2k+1}$. In Section~\ref{sec:entropy}, we review the basics of the entropy method and use it to prove a lemma that allows us, under certain conditions, to relate the homomorphism density of $P_{2k+1}^A$ to that of an edge-coloured forest $H$. Then, in Section~\ref{sec:construction}, we provide the rather technical construction of $H_{2k+1}$, show that it satisfies the condition of the lemma from the previous section, and bound its homomorphism density from above in terms of $P_{2k+1}^A$ to complete the proof. Some of the calculations related to this technical construction are included in an appendix. We then close the paper in Section~\ref{sec:conclusion} with some concluding remarks and ruminations about possible directions for future work. 

\begin{rem}
We have learned that Balogh, Lidick\'y, Mubayi and Pfender (personal communication) were working on problems related to semi-inducibility simultaneously to us and have obtained some related results. 
\end{rem}

\section{Proof Sketch and Warm-Up}
\label{sec:sketch}

A key idea in our proof of Theorem~\ref{thm:paths} is to construct a forest $H_{2k+1}$ such that the homomorphism density of $H_{2k+1}$ can be bounded from below and above in terms of the homomorphism density of $P_{2k+1}^A$, as described by the next lemma. For an edge-coloured graph $F$, we let $e_R(F)$ and $e_B(F)$ denote the number of red or blue edges of $F$, respectively, and $e(F)=e_R(F)+e_B(F)$. 

\begin{lem}
\label{lem:key}
For each $k\geq1$, there exists an edge-coloured forest $H_{2k+1}$ with more edges than $P_{2k+1}^A$ such that $k\cdot e_R(H_{2k+1})=(k+1)\cdot e_B(H_{2k+1})$ and every edge-coloured graph $G$ satisfies
\begin{equation}
\label{eq:PH}
t(P_{2k+1}^A,G)^{1/e(P_{2k+1}^A)}\leq t(H_{2k+1},G)^{1/e(H_{2k+1})}
\end{equation}
and 
\begin{equation}
\label{eq:HP}
t(H_{2k+1},G)\leq \left(k^{\frac{k}{2k+1}}(k+1)^{\frac{k+1}{2k+1}}(2k+1)^{-1}\right)^{e(H_{2k+1})-e(P_{2k+1}^A)}t(P_{2k+1}^A,G).
\end{equation}
\end{lem}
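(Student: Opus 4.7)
My plan is to prove Lemma~\ref{lem:key} by explicitly constructing the forest $H_{2k+1}$ and then establishing the two inequalities (\ref{eq:PH}) and (\ref{eq:HP}) separately by complementary techniques. The colour-ratio constraint $k\cdot e_R(H_{2k+1})=(k+1)\cdot e_B(H_{2k+1})$ is preserved by taking unions of copies of $P_{2k+1}^A$, and the simplest candidate is two copies of $P_{2k+1}^A$ identified at their starting vertex $v_0$: this is a tree with $e_R=2(k+1)$ and $e_B=2k$, and in the extremal regular $G$ a direct calculation shows that for $k=1$ both inequalities hold with equality. For larger $k$, however, (\ref{eq:HP}) fails to be tight enough globally under this simple gluing, so $H_{2k+1}$ must be augmented with additional alternating sub-paths attached at internal vertices. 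I expect the construction to proceed iteratively: starting from $P_{2k+1}^A$, at each stage one attaches a short alternating sub-path at an internal vertex whose current red-to-blue ratio is most out of balance, continuing until every vertex has its incident edges close to the target ratio $(k+1):k$, which is the ratio at which the AM--GM step in the proof of (\ref{eq:HP}) becomes sharp.

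For (\ref{eq:PH}), I would first establish a general lemma: for any edge-coloured forest $H$ obtained by gluing copies of $P_{2k+1}^A$ along vertices only, $t(H,G)^{1/e(H)}\geq t(P_{2k+1}^A,G)^{1/e(P_{2k+1}^A)}$. Write $\hom(H_{2k+1},G)$ as a sum, over images of the branch vertices, of a product of partition functions for sub-paths isomorphic to $P_{2k+1}^A$; at each branch vertex a Cauchy--Schwarz or weighted H\"older estimate bounds this sum below by a power of $\hom(P_{2k+1}^A,G)$, and accumulating these estimates over all branch vertices yields the desired inequality after normalising by $n^{v(H_{2k+1})}$. Equivalently, the argument can be formulated in terms of the joint entropy of a uniformly random homomorphism $\varphi\colon V(H_{2k+1})\to V(G)$ together with the subadditivity of entropy along the chosen sub-path cover.

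For (\ref{eq:HP}), since $H_{2k+1}$ is a forest containing $P_{2k+1}^A$, the ratio $t(H_{2k+1},G)/t(P_{2k+1}^A,G)$ decomposes as an expectation, over a random homomorphism of $P_{2k+1}^A$ to $G$, of a product of terms $d_R(\varphi(v))/n$ and $d_B(\varphi(v))/n$, one per extra edge of $H_{2k+1}$. Applying the weighted AM--GM inequality
\[
d_R(w)^{k+1}d_B(w)^k\leq \left(\frac{(k+1)d_R(w)+k\,d_B(w)}{2k+1}\right)^{2k+1}
\]
pointwise, together with the local balance built into the construction, bounds this expectation by $\bigl(k^{k/(2k+1)}(k+1)^{(k+1)/(2k+1)}(2k+1)^{-1}\bigr)^{e(H_{2k+1})-e(P_{2k+1}^A)}$, as required.

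The main obstacle is the construction: one must design $H_{2k+1}$ so that (\ref{eq:PH}) and (\ref{eq:HP}) hold \emph{simultaneously}. The two inequalities pull in opposite directions --- (\ref{eq:PH}) prefers a ``well-covered'' sub-path decomposition that keeps $t(H_{2k+1},G)$ large, while (\ref{eq:HP}) requires local AM--GM tightness at every attachment vertex, which constrains $t(H_{2k+1},G)$ from above --- so reconciling them within a single forest, and then verifying both the identity $k\cdot e_R=(k+1)\cdot e_B$ and both bounds, is where the bulk of the work lies, matching the authors' description of Section~\ref{sec:construction} as ``rather technical.''
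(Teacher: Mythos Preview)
Your high-level plan (entropy/convexity for \eqref{eq:PH}, local degree products for \eqref{eq:HP}) matches the paper's, but the concrete mechanism you propose for each step diverges from what actually works, and in its current form does not close.

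\textbf{Construction and \eqref{eq:PH}.} The paper does \emph{not} build $H_{2k+1}$ by gluing copies (or sub-paths) of $P_{2k+1}^A$. Instead it starts from $P_{2k+1}^A$ and attaches, at each $v_j$, exactly $(k+1)x_j$ red and $kx_j$ blue pendant leaves, adds $(k+1)y_j$ isolated copies of the edge $v_jv_{j+1}$, and attaches $(k+1)z_j$ red-then-blue and $kz_j$ blue-then-blue pendant $2$-paths, for carefully chosen integer sequences $\vec{x},\vec{y},\vec{z}$. The reason for this shape is the hypothesis of Lemma~\ref{lem:entropy}: what drives \eqref{eq:PH} is a homomorphism \emph{from} $H_{2k+1}$ \emph{to} $P_{2k+1}^A$ that covers every vertex and every edge of $P_{2k+1}^A$ the \emph{same} number of times. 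Your Cauchy--Schwarz lemma for gluing full copies of $P_{2k+1}^A$ at a single vertex is fine, but the moment you attach shorter alternating sub-paths (as you say is needed for $k\geq2$), a ``sub-path cover'' no longer yields $t(H,G)^{1/e(H)}\geq t(P_{2k+1}^A,G)^{1/e(P_{2k+1}^A)}$ unless the cover is uniform in exactly this sense. The isolated edges (the $y_j$ contributions) are essential precisely to achieve this uniform covering, and they have no analogue in your proposal.

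\textbf{The upper bound \eqref{eq:HP}.} Your claim that $t(H_{2k+1},G)/t(P_{2k+1}^A,G)$ decomposes as an expectation of a product of factors $d_R(\varphi(v))/n$ and $d_B(\varphi(v))/n$, one per extra edge, is only correct when every extra edge is a pendant leaf on the base path. In the paper's construction this handles the $x_j$ part: the leaves at $v_j$ contribute $d_R(f(v_j))^{(k+1)x_j}d_B(f(v_j))^{kx_j}$, and the optimisation $d_R^{k+1}d_B^k\leq (n-1)^{2k+1}(k+1)^{k+1}k^k(2k+1)^{-2k-1}$ subject to $d_R+d_B\leq n-1$ gives exactly the right constant (your displayed AM--GM inequality bounds by $((k+1)d_R+kd_B)^{2k+1}/(2k+1)^{2k+1}$, which is not the constraint you have and does not yield the required constant). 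But the $2$-path attachments contribute factors $d_{RB}(f(v_j))^{(k+1)z_j}d_{BB}(f(v_j))^{kz_j}$; these are controlled via $d_{RB}(v)+d_{BB}(v)\leq 2e_B(G)$, and the resulting powers of $2e_B(G)$ are then reabsorbed as additional isolated blue edges. Only after these manoeuvres is the remaining forest a disjoint union of $P_{2k+1}^A$ with isolated red and blue edges in ratio $(k+1):k$, at which point a final $e_R(G)+e_B(G)\leq \binom{n}{2}$ optimisation finishes \eqref{eq:HP}. Your iterative ``attach a sub-path where the ratio is most out of balance'' scheme does not produce this structure, and without the isolated-edge and $2$-path bookkeeping there is no reason the two inequalities should hold simultaneously.
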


Note that, while many of the bounds in this paper are only tight when $G$ is an edge-coloured clique, it is interesting that \eqref{eq:PH} is tight for a much wider class of edge-coloured graphs. To see this, suppose that $p$ and $q$ are positive reals such that $p+q\leq 1$ and let $G$ be any edge-coloured graph on $n$ vertices in which every vertex is incident to about $pn$ red edges and about $qn$ blue edges. Since $P_{2k+1}^A$ and $H_{2k+1}$ are both forests, we get
\[t(P_{2k+1}^A,G)\approx p^{k+1}q^{k}\]
and
\[t(H_{2k+1},G)\approx p^{e_R(H_{2k+1})}q^{e_B(H_{2k+1})}=p^{\frac{k+1}{2k+1}e(H_{2k+1})}q^{\frac{k}{2k+1}e(H_{2k+1})}=t(P_{2k+1}^A,G)^{\frac{e(H_{2k+1})}{e(P_{2k+1}^A)}}.\]
and so \eqref{eq:PH} is tight for any such $G$. Let us demonstrate that Theorem~\ref{thm:paths} follows easily from Lemma~\ref{lem:key}, after which we will turn to sketching the proof of the lemma itself. 

\begin{proof}[Proof of Theorem~\ref{thm:paths}]
Let $G$ be an edge-coloured graph. By Lemma~\ref{lem:key}, we have
\begin{align*}t(P_{2k+1}^A,G)^{e(H_{2k+1})}&\leq t(H_{2k+1},G)^{e(P_{2k+1}^A)}\\&\leq \left(k^{\frac{k}{2k+1}}(k+1)^{\frac{k+1}{2k+1}}(2k+1)^{-1}\right)^{e(P_{2k+1}^A)\left(e(H_{2k+1})-e(P_{2k+1}^A)\right)}t(P_{2k+1}^A,G)^{e(P_{2k+1}^A)}.\end{align*}
The result now follows from a bit of algebra and the fact that $e(H_{2k+1})>e(P_{2k+1}^A)$. 
\end{proof}

A key aspect of $H_{2k+1}$ is that it is constructed by starting with $P_{2k+1}^A$ and adding pendant edges, short pendant paths, disjoint edges and disjoint vertices in such a way that $\hom(H_{2k+1},G)$ can be easily bounded above in terms of $\hom(P_{2k+1}^A,G)$ by analyzing local quantities, such as the degree sequence. Let us take a look at the construction in the most basic case $k=1$. Two suitable candidates for the edge-coloured forest $H_3$ are given in Figure~\ref{fig:H3} (as it turns out, there are many different forests which satisfy the conclusion of Lemma~\ref{lem:key}).

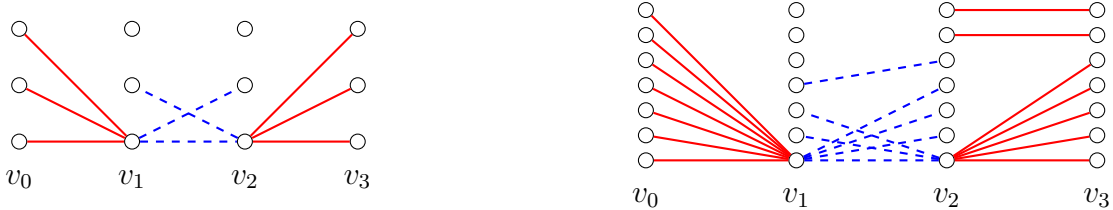
\begin{figure}[htbp]
\begin{center}
\begin{minipage}{0.45\textwidth}
\centering
\begin{tikzpicture}[scale=1.5, every node/.style={draw, circle, fill=white, inner sep=2pt}]
\node (v0) at (0, 0) {};
\node (v1) at (1, 0) {};
\node (v2) at (2, 0) {};
\node (v3) at (3, 0) {};
\node (x11) at (0,0.5){};
\node (x12) at (0,1){};
\node (x21) at (3,0.5){};
\node (x22) at (3,1){};
\node (x'1) at (2,0.5){};
\node (x'2) at (1,0.5){};
\node (w1) at (1,1){};
\node (w2) at (2,1){};

\node[below=5pt, draw=none, fill=none] at (v0) {$v_0$};
\node[below=5pt, draw=none, fill=none] at (v1) {$v_1$};
\node[below=5pt, draw=none, fill=none] at (v2) {$v_2$};
\node[below=5pt, draw=none, fill=none] at (v3) {$v_3$};

\draw[red, thick] (v0) -- (v1);
\draw[blue, dashed, thick] (v1) -- (v2);
\draw[red, thick] (v2) -- (v3);
\draw[red, thick] (v1) -- (x11);
\draw[red, thick] (v1) -- (x12);
\draw[red, thick] (v2) -- (x21);
\draw[red, thick] (v2) -- (x22);
\draw[blue, dashed, thick] (v1) -- (x'1);
\draw[blue, dashed, thick] (v2) -- (x'2);
\end{tikzpicture}
\end{minipage}
\hfill
\begin{minipage}{0.45\textwidth}
\centering
\begin{tikzpicture}[scale=2, every node/.style={draw, circle, fill=white, inner sep=2pt}]
\node (v0) at (0, 0) {};
\node (v1) at (1, 0) {};
\node (v2) at (2, 0) {};
\node (v3) at (3, 0) {};
\node (x11) at (0,0.16666666666){};
\node (x12) at (0,0.33333333333){};
\node (x13) at (0,0.5){};
\node (x14) at (0,0.66666666666){};
\node (x15) at (0,0.83333333333){};
\node (x16) at (0,1){};
\node (x21) at (3,0.16666666666){};
\node (x22) at (3,0.33333333333){};
\node (x23) at (3,0.5){};
\node (x24) at (3,0.66666666666){};
\node (x'11) at (2,0.16666666666){};
\node (x'12) at (2,0.33333333333){};
\node (x'13) at (2,0.5){};
\node (x'21) at (1,0.16666666666){};
\node (x'22) at (1,0.33333333333){};
\node (w11) at (1,1){};
\node (w12) at (1,0.83333333333){};
\node (w13) at (1,0.66666666666){};
\node (y1) at (1,0.5){};
\node (y2) at (2,0.66666666666){};
\node (y3) at (2,0.83333333333){};
\node (y4) at (3,0.83333333333){};
\node (y5) at (2,1){};
\node (y6) at (3,1){};

\node[below=5pt, draw=none, fill=none] at (v0) {$v_0$};
\node[below=5pt, draw=none, fill=none] at (v1) {$v_1$};
\node[below=5pt, draw=none, fill=none] at (v2) {$v_2$};
\node[below=5pt, draw=none, fill=none] at (v3) {$v_3$};

\draw[red, thick] (v0) -- (v1);
\draw[blue, dashed, thick] (v1) -- (v2);
\draw[red, thick] (v2) -- (v3);
\draw[red, thick] (v1) -- (x11);
\draw[red, thick] (v1) -- (x12);
\draw[red, thick] (v1) -- (x13);
\draw[red, thick] (v1) -- (x14);
\draw[red, thick] (v1) -- (x15);
\draw[red, thick] (v1) -- (x16);
\draw[red, thick] (v2) -- (x21);
\draw[red, thick] (v2) -- (x22);
\draw[red, thick] (v2) -- (x23);
\draw[red, thick] (v2) -- (x24);
\draw[blue, dashed, thick] (v1) -- (x'11);
\draw[blue, dashed, thick] (v1) -- (x'12);
\draw[blue, dashed, thick] (v1) -- (x'13);
\draw[blue, dashed, thick] (v2) -- (x'21);
\draw[blue, dashed, thick] (v2) -- (x'22);
\draw[blue, dashed, thick] (y1) -- (y2);
\draw[red, thick] (y3) -- (y4);
\draw[red, thick] (y5) -- (y6);
\end{tikzpicture}
\end{minipage}
\end{center}
\caption{Two possible choices of the edge-coloured forest $H_3$ in Lemma~\ref{lem:key}. Red edges are represented by solid lines and blue edges are represented by dashed lines.}
\label{fig:H3}
\end{figure}

For either of the candidates for $H_3$ in Figure~\ref{fig:H3}, the inequality \eqref{eq:HP} is easy to prove by analyzing vertex degrees. Indeed, given a vertex $v$ in an edge-coloured graph $G$, let $d_R(v)$ and $d_B(v)$ be the numbers of red and blue edges, respectively, incident to $v$. Consider the first coloured graph in Figure~\ref{fig:H3}. The vertices labelled $v_0,v_1,v_2$ and $v_3$ in the figure form a copy of $P_3^A$. So, for any $n$-vertex edge-coloured graph $G$ and any homomorphism from $H_3$ to $G$, the restriction of this homomorphism to $\{v_0,v_1,v_2,v_3\}$ is a homomorphism from $P_3^A$ to $G$. Given $f\in \Hom(P_3^A,G)$, let $\hom(H_3,G;f)$ be the number of homomorphisms from $H_3$ to $G$ whose restriction to $\{v_0,v_1,v_2,v_3\}$ is equal to $f$. Then, given that $\{v_0,v_1,v_2,v_3\}$ is mapped according to $f$, the number of choices for each isolated vertex of $H_3$ is clearly $n$, the number of choices of each red leaf adjacent to $v_1$, other than $v_0$, is $d_R(f(v_1))$, and so on. Thus,
\begin{align*}\hom(H_3,G) &= \sum_{f\in \Hom(P_3^A,G)}\hom(H_3,G;f)\\&= \sum_{f\in \Hom(P_3^A,G)}n^2d_R(f(v_1))^2d_B(f(v_1))d_R(f(v_2))^2d_B(f(v_2)).\end{align*}
Now, clearly, every vertex $u$ of $G$ satisfies $d_R(u)+d_B(u)\leq n-1$, which implies\footnote{Here, and throughout the paper, we frequently use the following inequality that is easy to prove with basic Calculus: If $a,b,m>0$ and $x,y\geq0$ are such that $x+y\leq m$, then $x^ay^b\leq m^{a+b}\left(\frac{a}{a+b}\right)^a\left(\frac{b}{a+b}\right)^b$.\label{calculus}} that $d_R(u)^2d_B(u)\leq (n-1)^3(1/3)(2/3)^2$. So,
\[\hom(H_3,G)\leq n^8(1/3)^2(2/3)^4\hom(P_3^A,G)\]
which, after dividing both sides by $n^{v(H_3)}=n^{v(P_3^A)+8}$, tells us that $H_3$ satisfies the inequality \eqref{eq:HP} in Lemma~\ref{lem:key}. The argument for the other candidate for $H_3$ in Figure~\ref{fig:H3} is fairly similar. For larger values of of $k$, the proof of \eqref{eq:HP} follows from similar principles, but the constructions and local inequalities that we use are slightly more complicated. To get a taste for the level of complexity in these constructions, see the candidate for $H_5$ in Figure~\ref{fig:H5}.

\begin{figure}[htbp]
{\begin{center}
\begin{tikzpicture}[scale=1.65, every node/.style={draw, circle, fill=white, inner sep=2pt}]

\node (v0) at (0, 0) {};
\node (v1) at (1, 0) {};
\node (v2) at (2, 0) {};
\node (v3) at (3, 0) {};
\node (v4) at (4, 0) {};
\node (v5) at (5, 0) {};
\node (x1) at (0,0.5){};
\node (x2) at (3,1){};
\node (x3) at (2,1){};
\node (x4) at (5,0.5){};
\node (x1') at (2,0.5){};
\node (x2') at (1,0.5){};
\node (x3') at (4,0.5){};
\node (x4') at (3,0.5){};
\node (z21) at (3,1.5){};
\node (z31) at (2,1.5){};
\node (z22) at (4,1.5){};
\node (z32) at (1,1.5){};
\node (z23) at (1,1){};
\node (z33) at (4,1){};
\node (z24) at (2,2){};
\node (z34) at (3,2){};
\node (y0) at (0,2){};
\node (y1) at (1,2){};
\node (y2) at (2,2.5){};
\node (y3) at (3,2.5){};
\node (y4) at (4,2){};
\node (y5) at (5,2){};

\node[below=5pt, draw=none, fill=none] at (v0) {$v_0$};
\node[below=5pt, draw=none, fill=none] at (v1) {$v_1$};
\node[below=5pt, draw=none, fill=none] at (v2) {$v_2$};
\node[below=5pt, draw=none, fill=none] at (v3) {$v_3$};
\node[below=5pt, draw=none, fill=none] at (v4) {$v_4$};
\node[below=5pt, draw=none, fill=none] at (v5) {$v_5$};
\node[above=1pt, draw=none, fill=none] at (0.5,2) {36};
\node[above=1pt, draw=none, fill=none] at (4.5,2) {36};
\node[above=1pt, draw=none, fill=none] at (2.5,2.5) {3};
\node[above=1pt, draw=none, fill=none] at (x1) {39};
\node[above=1pt, draw=none, fill=none] at (x4) {39};
\node[left=1pt, draw=none, fill=none] at (x1') {26};
\node[right=1pt, draw=none, fill=none] at (x4') {26};
\node[right=1pt, draw=none, fill=none] at (x2) {21};
\node[left=1pt, draw=none, fill=none] at (x3) {21};
\node[left=1pt, draw=none, fill=none] at (x2') {14};
\node[right=1pt, draw=none, fill=none] at (x3') {14};
\node[left=1pt, draw=none, fill=none] at (z32) {15};
\node[right=1pt, draw=none, fill=none] at (z22) {15};
\node[above=1pt, draw=none, fill=none] at (z34) {10};
\node[above=1pt, draw=none, fill=none] at (z24) {10};

\draw[red, thick] (v0) -- (v1); 
\draw[blue, dashed, thick] (v1) -- (v2); 
\draw[red, thick] (v2) -- (v3); 
\draw[blue, dashed, thick] (v3) -- (v4); 
\draw[red, thick] (v4) -- (v5); 

\draw[red, thick] (v1) -- (x1); 
\draw[red, thick] (v2) -- (x2); 
\draw[red, thick] (v3) -- (x3); 
\draw[red, thick] (v4) -- (x4); 
\draw[blue, dashed, thick] (v1) -- (x1'); 
\draw[blue, dashed, thick] (v2) -- (x2'); 
\draw[blue, dashed, thick] (v3) -- (x3'); 
\draw[blue, dashed, thick] (v4) -- (x4'); 
\draw[red, thick] (v3) -- (z31); 
\draw[red, thick] (v2) -- (z21); 
\draw[blue, dashed, thick] (z31) -- (z32); 
\draw[blue, dashed, thick] (z21) -- (z22); 
\draw[blue, dashed, thick] (v3) -- (z33); 
\draw[blue, dashed, thick] (v2) -- (z23); 
\draw[blue, dashed, thick] (z33) -- (z34); 
\draw[blue, dashed, thick] (z23) -- (z24); 
\draw[red, thick] (y0) -- (y1); 
\draw[red, thick] (y2) -- (y3); 
\draw[red, thick] (y4) -- (y5); 
\end{tikzpicture}
\end{center}}
    \caption{A representation of a possible choice of the edge-coloured forest $H_5$ in Lemma~\ref{lem:key}. Each isolated edge is labelled by a number which represents the number of copies of that edge added in the construction. Each leaf that is not in an isolated edge, apart from $v_0$ and $v_5$, is labelled with a number. This number represents the number of copies of the unique path from that leaf to $\{v_0,\dots,v_5\}$ that are added in the construction of $H_5$ from $P_5^A$. For example, $H_5$ contains exactly 14 leaves which are adjacent to $v_3$ via a blue edge, and exactly 15 paths of length two starting with $v_2$ in which the first edge is red and the second edge is blue and the final vertex is not $v_4$.}
    \label{fig:H5}
\end{figure}
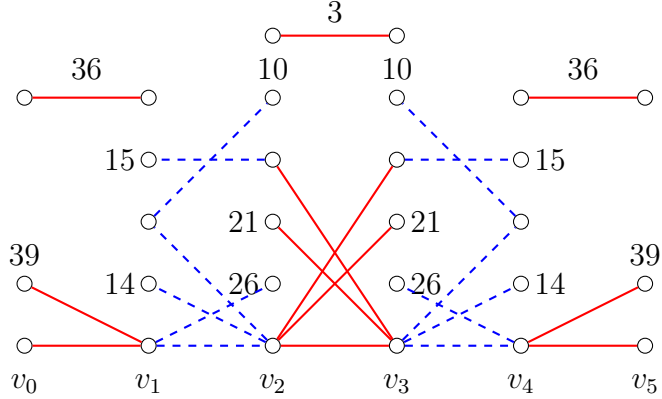

What remains is to address the inequality \eqref{eq:PH}. The proof of this will use the concept of entropy from information theory. The application of entropy in proving inequalities for graph homomorphism counts was pioneered in a paper of Kopparty and Rossman~\cite{KoppartyRossman11} and has become an indispensable tool in the area; see, e.g.,~\cite{Lee21,BehagueCrudeleNoelSimbaqueba+25+,BehagueMorrisonNoel24,ConlonKimLeeLee18,Szegedy15,BlekhermanRaymond23,BlekhermanRaymond22,GrzesikLeeLidickyVolec22,ChaoYu24}. Our approach is particularly closely related to the ideas used in~\cite{BehagueCrudeleNoelSimbaqueba+25+} to prove inequalities analogous to \eqref{eq:PH} for pairs of trees without edge colours. 

To explain the proof of \eqref{eq:PH} in detail, even in the case $k=1$, requires a formal introduction to entropy and its basic properties, which is provided in Section~\ref{sec:entropy}. To keep this sketch light, we will only give a vague idea of the proof which highlights the key properties of $H_{2k+1}$ that we require. Some of the technical jargon related to entropy and probability theory is written in quotation marks to indicate that these terms have not been properly defined yet, but they will be in Section~\ref{sec:entropy}.

Consider the second edge-coloured graph $H_3$ in Figure~\ref{fig:H3}. Let $G$ be an edge-coloured graph with $\hom(P_3^A,G)\geq1$, let $f$ be a uniformly random homomorphism from $P_3^A$ to $G$ and consider the random variable $(X_0,X_1,X_2,X_3)$ where $X_i:=f(v_i)$ for $0\leq i\leq 3$. Of course, the random variables $f$ and $(X_0,X_1,X_2,X_3)$ contain precisely the same information (i.e. each is uniquely determined by the other). Now, in order to prove that $\hom(H_3,G)\geq\hom(P_3^A,G)^7$ (which is equivalent to \eqref{eq:PH} in this case), it suffices to construct a distribution on $\Hom(H_3,G)$ with ``entropy'' at least seven times that of $(X_0,X_1,X_2,X_3)$. In constructing this distribution, we start by mapping vertices $(v_0,v_1,v_2,v_3)$ according to the random variable $(X_0,X_1,X_2,X_3)$; this will preserve all edges and edge colours between these vertices since $f$ was a homomorphism from $P_3^A$ to $G$. Next, for each of the six leaves of $H_3$ adjacent to $v_1$, other than $v_0$, we map them according to the distribution of $X_0$, ``conditioned on'' the choice of $X_1$, in such a way that this random variable is ``conditionally independent'' of all other previous choices given $X_1$. The mapping of the other leaves adjacent to $v_1$ and $v_2$, other than $v_0$ and $v_3$, is done analogously. The two vertices of the isolated blue edge are mapped according to $(X_1,X_2)$ and the isolated red edges are both mapped according to $(X_2,X_3)$ in a way that is independent of all other choices made so far. Finally, the three isolated vertices are each mapped according to the distribution of $X_1$, independently of previous choices. 

As it turns out, the entropy of this distribution can be shown to be exactly seven times that of $(X_0,X_1,X_2,X_3)$ which allows us to use standard facts about entropy to conclude that \eqref{eq:PH} holds. To prove this correspondence between the entropy of this distribution and that of $(X_0,X_1,X_2,X_3)$, what we need is the existence of a homomorphism from $H_3$ to $P_3^A$ which ``covers'' every vertex and edge of $P_3^A$ exactly seven times. The drawing of $H_3$ in Figure~\ref{fig:H3} was chosen to make this mapping easy to see; if we simply map each vertex of $H_3$ to the vertex $v_0,v_1,v_2$ or $v_3$ drawn in the same column as it, then we get a homomorphism from $H_3$ to $P_3^A$ in which the preimage of each vertex and edge of $P_3^A$ consists of exactly seven vertices or edges of $H_3$. The other candidate for $H_3$ in Figure~\ref{fig:H3} has a similar property, except that each vertex and edge is covered three times, and so this yields a distribution on $\Hom(H_3,G)$ with entropy exactly three times the entropy of $(X_0,X_1,X_2,X_3)$. A similar idea can be used for the example of $H_5$ in Figure~\ref{fig:H5} because $H_5$ admits a homomorphism to $P_5^A$ covering every vertex and edge exactly 76 times. This concludes our brief sketch of the proof. 

To close this section, we remark that a similar, but easier, construction can be used to get an alternative proof of the inequality $t(P_{2k}^A,G)\leq (1/2)^{2k}$ in Theorem~\ref{thm:even} proved in~\cite{Basit+25+}. In this case, one can let $H_{2k}$ be obtained from $P_{2k}$ by adding, for each $1\leq j\leq 2k-1$, one vertex adjacent to $v_j$ via a red edge and one adjacent to $v_j$ via a blue edge and then adding an isolated edge of each colour. See Figure~\ref{fig:H2k} for an illustration. The forest $H_{2k}$ satisfies inequalities analogous to \eqref{eq:PH} and \eqref{eq:HP} for the path $P_{2k}^A$ of even length.

\begin{figure}[htbp]
{\begin{center}
\begin{tikzpicture}[scale=1.65, every node/.style={draw, circle, fill=white, inner sep=2pt}]

\node (v0) at (0, 0) {};
\node (v1) at (1, 0) {};
\node (v2) at (2, 0) {};
\node (v3) at (3, 0) {};
\node (v4) at (5, 0) {};
\node (v5) at (6, 0) {};
\node (v6) at (7, 0) {};
\node (v7) at (8, 0) {};

\node (u1) at (0.8, 1) {};
\node (u2) at (1.8, 1) {};
\node (u3) at (2.8, 1) {};
\node (u4) at (4.8, 1) {};
\node (u5) at (5.8, 1) {};
\node (u6) at (6.8, 1) {};

\node (w1) at (1.2, 1) {};
\node (w2) at (2.2, 1) {};
\node (w3) at (3.2, 1) {};
\node (w4) at (5.2, 1) {};
\node (w5) at (6.2, 1) {};
\node (w6) at (7.2, 1) {};

\node (z1) at (1, -1) {};
\node (z2) at (2, -1) {};
\node (z3) at (6, -1) {};
\node (z4) at (7, -1) {};

\node[below=5pt, draw=none, fill=none] at (v0) {$v_0$};
\node[below=5pt, draw=none, fill=none] at (v1) {$v_1$};
\node[below=5pt, draw=none, fill=none] at (v2) {$v_2$};
\node[below=5pt, draw=none, fill=none] at (v3) {$v_3$};
\node[below=1.2pt, draw=none, fill=none] at (v4) {$v_{2k-3}$};
\node[below=1.2pt, draw=none, fill=none] at (v5) {$v_{2k-2}$};
\node[below=1.2pt, draw=none, fill=none] at (v6) {$v_{2k-1}$};
\node[below=5pt, draw=none, fill=none] at (v7) {$v_{2k}$};

\draw[red, thick] (v0) -- (v1); 
\draw[blue, dashed, thick] (v1) -- (v2); 
\draw[red, thick] (v2) -- (v3);
\draw[blue, dashed, thick] (v4) -- (v5); 
\draw[red, thick] (v5) -- (v6); 
\draw[blue, dashed, thick] (v6) -- (v7); 

\draw[red, thick] (v1) -- (u1);
\draw[red, thick] (v2) -- (u2);
\draw[red, thick] (v3) -- (u3);
\draw[red, thick] (v4) -- (u4);
\draw[red, thick] (v5) -- (u5);
\draw[red, thick] (v6) -- (u6);

\draw[blue, dashed, thick] (v1) -- (w1);
\draw[blue, dashed, thick] (v2) -- (w2);
\draw[blue, dashed, thick] (v3) -- (w3);
\draw[blue, dashed, thick] (v4) -- (w4);
\draw[blue, dashed, thick] (v5) -- (w5);
\draw[blue, dashed, thick] (v6) -- (w6);

\draw[blue, dashed, thick] (z1) -- (z2);
\draw[red, thick] (z3) -- (z4);

\fill (3.85,0) circle (0.5pt);
\fill (4.0,0) circle (0.5pt);
\fill (4.15,0) circle (0.5pt);
\end{tikzpicture}
\end{center}}
    \caption{The edge-colored graph $H_{2k}$.}
    \label{fig:H2k}
\end{figure}
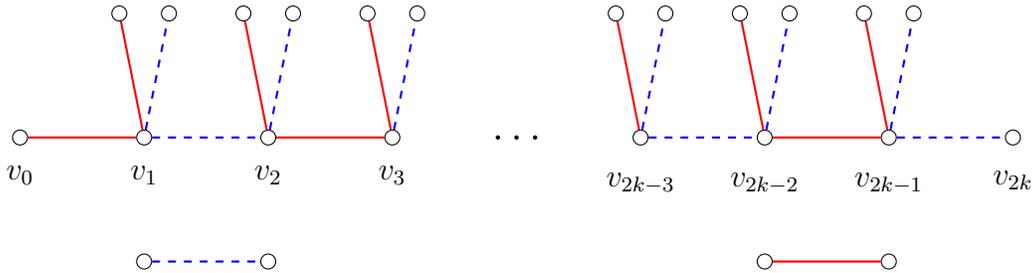

\section{Homomorphism Counting Inequalities Via Entropy}
\label{sec:entropy}

Our focus in this section is on proving a lemma which will allow us to establish the inequality \eqref{eq:PH} in Lemma~\ref{lem:key}. To keep the calculations simple and focused, we will only deal with the case of odd alternating paths; however, by incorporating more of the ideas from~\cite{BehagueCrudeleNoelSimbaqueba+25+} (which are built upon ideas from~\cite{KoppartyRossman11}), one can extend this approach to edge-coloured trees and forests more generally. Some of the ideas are also useful beyond the setting of forests (see, e.g.,~\cite{BehagueMorrisonNoel23+,KoppartyRossman11,ConlonKimLeeLee18}). Further discussion on these possible directions can be found in the conclusion.

The key definition that we need is the notion of the entropy of a discrete random variable. All of the lemmas about entropy stated without proof in this section are standard; many of them can be found in, e.g.,~\cite[Section~15.7]{AlonSpencer}.

\begin{defn}
For a discrete random variable $X$, define the \emph{range} of $X$ to be $\rng(X):=\{x: \mathbb{P}(X=x)>0\}$. 
\end{defn}

\begin{defn}
\label{defn:entropy}
The \emph{entropy} of a discrete random variable $X$ is 
\[\mathbb{H}(X):=-\sum_{x\in \rng(X)}\mathbb{P}(X=x)\log_2(\mathbb{P}(X=x)).\]
\end{defn}

An important property of entropy is that, among all random variables with a given range, the one with largest entropy is the uniform distribution. This can be proved via a simple application of Jensen's Inequality. 

\begin{lem}[Maximality of the Uniform Distribution]
\label{lem:unif}
If $X$ is a random variable with finite range, then
\[\mathbb{H}(X)\leq \log_2(|\rng(X)|)\]
with equality if and only if $\mathbb{P}(X=x)=\frac{1}{|\rng(X)|}$ for all $x\in \rng(X)$. 
\end{lem}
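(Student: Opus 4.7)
The plan is to rewrite the entropy as an expectation and apply Jensen's inequality to the concave function $\log_2$. Concretely, I would first observe that
\[
\mathbb{H}(X) = \sum_{x \in \rng(X)} \mathbb{P}(X=x)\, \log_2\!\left(\frac{1}{\mathbb{P}(X=x)}\right),
\]
which expresses $\mathbb{H}(X)$ as a convex combination of the values $\log_2(1/\mathbb{P}(X=x))$ with weights $\mathbb{P}(X=x)$.

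Since $\log_2$ is concave on $(0,\infty)$, Jensen's inequality pushes the logarithm outside the sum, giving
\[
\mathbb{H}(X) \leq \log_2\!\left(\sum_{x \in \rng(X)} \mathbb{P}(X=x) \cdot \frac{1}{\mathbb{P}(X=x)}\right) = \log_2(|\rng(X)|),
\]
because the inner sum has $|\rng(X)|$ terms, each equal to $1$. This establishes the stated upper bound on $\mathbb{H}(X)$.

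For the equality characterization, I would invoke the fact that $\log_2$ is \emph{strictly} concave on $(0,\infty)$. The equality case of Jensen's inequality for a strictly concave function occurs precisely when the quantity being averaged is constant on the support of the averaging measure; here this forces the values $1/\mathbb{P}(X=x)$ to be equal across all $x \in \rng(X)$. Combined with the normalization $\sum_{x \in \rng(X)} \mathbb{P}(X=x) = 1$, this forces $\mathbb{P}(X=x) = 1/|\rng(X)|$ for every $x \in \rng(X)$, i.e., $X$ is uniform on its range, as required.

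I do not anticipate any real obstacle: the argument is essentially a one-line application of Jensen once the entropy is rewritten in the form above. The only step that merits a small amount of care is the equality analysis, where one must appeal to \emph{strict} (rather than merely weak) concavity of $\log_2$ in order to obtain a genuine ``if and only if''. As an alternative avoiding Jensen entirely, one could apply the strict concavity of the single-variable function $f(p) := -p\log_2 p$ directly, observing that $\frac{1}{n}\sum_{x}f(\mathbb{P}(X=x)) \leq f(1/n)$ with $n = |\rng(X)|$, and multiplying through by $n$.
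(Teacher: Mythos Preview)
Your proposal is correct and follows exactly the approach the paper indicates: the paper does not spell out a proof but simply remarks that the lemma ``can be proved via a simple application of Jensen's Inequality,'' which is precisely the argument you give. Your handling of the equality case via strict concavity is the standard completion of that sketch.
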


We also require the notion of conditional entropy. 

\begin{defn}
For two discrete random variables $X$ and $Y$ and $y\in \rng(Y)$, define the \emph{range of $X$ given that $Y=y$} to be $\rng(X\mid Y=y):=\{x: \mathbb{P}(X=x\mid Y=y)>0\}$.
\end{defn}

\begin{defn}
Given discrete random variables $X$ and $Y$ and $y\in\rng(Y)$, the \emph{conditional entropy of $X$ given that $Y=y$} is 
\[\mathbb{H}(X\mid Y=y):=-\sum_{x\in \rng(X\mid Y=y)}\mathbb{P}(X=x\mid Y=y)\log_2(\mathbb{P}(X=x\mid Y=y)).\]
\end{defn}

\begin{defn}
For discrete random variables $X$ and $Y$, the \emph{conditional entropy} of $X$ given $Y$ is
\[\mathbb{H}(X\mid Y)=\sum_{y\in \rng(Y)}\mathbb{P}(Y=y)\mathbb{H}(X\mid Y=y).\]
\end{defn}

The next lemma, known as the ``Chain Rule'' for entropy, is useful for breaking down the entropy of a tuple of random variables into a sum of conditional entropy expressions. The proof just relies on Bayes' formula for conditional probability and basic properties of logarithms. 

\begin{lem}[Chain Rule]
\label{lem:chain}
For any discrete random variables $X_1,\dots,X_n$, 
\[\mathbb{H}(X_1,\dots,X_n)=\mathbb{H}(X_1)+\sum_{i=2}^n\mathbb{H}(X_i\mid X_1,\dots,X_{i-1}).\]
\end{lem}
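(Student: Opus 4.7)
The plan is to establish the two-variable case $\mathbb{H}(X,Y) = \mathbb{H}(X) + \mathbb{H}(Y \mid X)$ directly from the definition, and then deduce the general $n$-variable statement by a routine induction on $n$.

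For the base case, I would start from $\mathbb{H}(X,Y) = -\sum_{(x,y) \in \rng(X,Y)} \mathbb{P}(X=x, Y=y) \log_2 \mathbb{P}(X=x, Y=y)$. The first step is to observe that a pair $(x,y)$ lies in $\rng(X,Y)$ if and only if $x \in \rng(X)$ and $y \in \rng(Y \mid X=x)$, so the sum may be rewritten as an iterated sum over these two index sets. Bayes' formula then gives $\mathbb{P}(X=x, Y=y) = \mathbb{P}(X=x)\cdot\mathbb{P}(Y=y \mid X=x)$, and the identity $\log_2(ab) = \log_2 a + \log_2 b$ splits each summand into two pieces. In the first piece, I would pull $-\log_2 \mathbb{P}(X=x)$ outside the inner sum and use $\sum_{y \in \rng(Y \mid X=x)} \mathbb{P}(Y=y \mid X=x) = 1$ to collapse it to $\mathbb{H}(X)$. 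In the second piece, factoring out $\mathbb{P}(X=x)$ from the inner sum recovers $\mathbb{H}(Y \mid X=x)$, and averaging over $x \in \rng(X)$ matches exactly the definition of $\mathbb{H}(Y \mid X)$.

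For the inductive step, assuming the statement for $n-1$ variables, I would apply the two-variable case with $X$ replaced by the tuple $(X_1, \ldots, X_{n-1})$ and $Y$ replaced by $X_n$. This yields $\mathbb{H}(X_1, \ldots, X_n) = \mathbb{H}(X_1, \ldots, X_{n-1}) + \mathbb{H}(X_n \mid X_1, \ldots, X_{n-1})$, and invoking the induction hypothesis to expand the first term produces the desired telescoping sum.

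There is no substantive obstacle here; the only care required is bookkeeping around the various ranges, making sure that the sums are restricted to outcomes with strictly positive (conditional) probability so that all logarithms are well defined, and verifying that iterating $x \in \rng(X)$ followed by $y \in \rng(Y \mid X=x)$ genuinely enumerates $\rng(X,Y)$ without repetition. Once these minor verifications are in place, the rest is a routine manipulation of definitions.
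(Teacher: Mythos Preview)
Your proposal is correct and matches the paper's indicated approach: the paper does not give a detailed proof but simply remarks that ``the proof just relies on Bayes' formula for conditional probability and basic properties of logarithms,'' which is exactly what your two-variable base case plus induction carries out.
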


We will require an analogue of Lemma~\ref{lem:unif} for conditional entropy. This requires the notion of conditional independence of random variables, which plays the same role in this lemma as the uniform distribution plays in Lemma~\ref{lem:unif}.

\begin{defn}
\label{def:condIndep}
Let $X,Y$ and $Z$ be three discrete random variables. We say that $X$ and $Z$ are \emph{conditionally independent given $Y$} if
\[\mathbb{P}(X=x\mid Y=y, Z=z)=\mathbb{P}(X=x\mid Y=y)\]
and
\[\mathbb{P}(Z=z\mid Y=y, X=x)=\mathbb{P}(Z=z\mid Y=y)\]
for all $y\in\rng(Y)$, $x\in \rng(X\mid Y=y)$ and $z\in\rng(Z\mid Y=y)$. 
\end{defn}

The next lemma can be proved using Jensen's Inequality in a way that is similar to the proof of Lemma~\ref{lem:unif}. 

\begin{lem}[Deconditioning Lemma]
\label{lem:condIndep}
For any discrete random variables $X,Y$ and $Z$, we have
\[\mathbb{H}(X\mid Y,Z)\leq \mathbb{H}(X\mid Y)\]
where equality holds if and only if $X$ and $Z$ are conditionally independent given $Y$. 
\end{lem}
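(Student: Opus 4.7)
The plan is to reduce the inequality to the nonnegativity of the conditional mutual information $I(X;Z\mid Y=y)$ for each $y\in\rng(Y)$, and then to read off the equality case from the equality case of Jensen's inequality applied to the strictly convex function $-\log_2$.

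Starting from the definitions of $\mathbb{H}(X\mid Y)$ and $\mathbb{H}(X\mid Y,Z)$, and using the identity $\mathbb{P}(X=x,Y=y)=\sum_z\mathbb{P}(X=x,Y=y,Z=z)$ together with Bayes' formula, a routine manipulation of logarithms yields
\[
\mathbb{H}(X\mid Y)-\mathbb{H}(X\mid Y,Z)=\sum_{y\in\rng(Y)}\mathbb{P}(Y=y)\sum_{x,z}p_y(x,z)\log_2\frac{p_y(x,z)}{p_y(x)p_y(z)},
\]
where, for each $y$, $p_y(x,z):=\mathbb{P}(X=x,Z=z\mid Y=y)$ and $p_y(x),p_y(z)$ denote its marginals, and the inner sum runs over pairs with $p_y(x,z)>0$. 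This identity uses no inequalities. For each fixed $y$, applying Jensen's inequality to the convex function $-\log_2$ (or equivalently the concavity of $\log_2$) gives
\[
\sum_{x,z}p_y(x,z)\log_2\frac{p_y(x)p_y(z)}{p_y(x,z)}\leq \log_2\Bigl(\sum_{x,z}p_y(x)p_y(z)\Bigr)\leq \log_2(1)=0,
\]
so each inner sum is nonnegative, and the weighted average over $y$ produces the desired inequality.

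For the equality case, since $-\log_2$ is strictly convex, equality in the Jensen step forces $\frac{p_y(x)p_y(z)}{p_y(x,z)}$ to be constant on the support of $p_y(x,z)$, and since $p_y(x,z)$ and the product $p_y(x)p_y(z)$ are both probability distributions on that support, this constant must equal $1$. Thus overall equality is equivalent to $p_y(x,z)=p_y(x)p_y(z)$ for every $y\in\rng(Y)$ with $\mathbb{P}(Y=y)>0$ and every $(x,z)$ with either side positive. Dividing this factorization by $p_y(z)$ (respectively $p_y(x)$) recovers exactly the two symmetric identities appearing in Definition~\ref{def:condIndep}.

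The main obstacle I anticipate is not the inequality itself, which is a one-line Jensen argument, but the careful bookkeeping of ranges and conditional supports: in particular, justifying that summing over $(x,z)$ with $p_y(x,z)>0$ is equivalent to the double sum defining $\mathbb{P}(Y=y)\mathbb{H}(X\mid Y=y,Z)$ (contributions from pairs with $p_y(x,z)=0$ vanish under the convention $0\log_2 0=0$), and verifying that the single factorization statement above captures both of the symmetric conditional independence equations in Definition~\ref{def:condIndep} rather than only one of them.
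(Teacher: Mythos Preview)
Your argument is correct and is exactly the approach the paper indicates: the paper does not give a proof but merely remarks that the lemma ``can be proved using Jensen's Inequality in a way that is similar to the proof of Lemma~\ref{lem:unif},'' and your reduction to the nonnegativity of conditional mutual information via Jensen is precisely that. One small tightening for the equality case: the product $p_y(x)p_y(z)$ is a priori a probability distribution on $\rng(X\mid Y=y)\times\rng(Z\mid Y=y)$ rather than on the support of $p_y(\cdot,\cdot)$, so to conclude the constant equals $1$ you should also use that your second inequality $\sum_{(x,z):p_y(x,z)>0} p_y(x)p_y(z)\le 1$ is tight, which then forces the product measure to be supported there.
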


Using the Chain Rule and conditional independence, we can now obtain a particularly nice expression for the entropy of a uniformly random homomorphism from $P_{2k+1}^A$ to an edge-coloured graph $G$. A similar formula holds for any edge-coloured forest. 

\begin{prop}
\label{prop:pathEntropy}
For $k\geq1$, let $G$ be an edge-coloured graph with $\hom(P_{2k+1}^A,G)\geq1$. Let $f$ be a uniformly random element of $\Hom(P_{2k+1}^A,G)$ and, for $0\leq i\leq 2k+1$, let $X_i:=f(v_i)$. Then the entropy of $f$ is equal to
\[\sum_{i=0}^{2k}\mathbb{H}(X_i,X_{i+1}) - \sum_{i=1}^{2k}\mathbb{H}(X_i).\]
\end{prop}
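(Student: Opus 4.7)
The plan is to establish the formula by applying the Chain Rule (Lemma~\ref{lem:chain}) to expand $\mathbb{H}(f)$ and then invoke the Deconditioning Lemma (Lemma~\ref{lem:condIndep}) to ``collapse'' each conditional entropy term to depend only on the immediately preceding vertex. Since $f$ is uniquely determined by the tuple $(X_0,\dots,X_{2k+1})$ and vice versa, we have $\mathbb{H}(f)=\mathbb{H}(X_0,X_1,\dots,X_{2k+1})$. By the Chain Rule this equals $\mathbb{H}(X_0)+\sum_{i=0}^{2k}\mathbb{H}(X_{i+1}\mid X_0,\dots,X_i)$.

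The crucial step is to show that, for each $0\leq i\leq 2k$,
\[\mathbb{H}(X_{i+1}\mid X_0,\dots,X_i)=\mathbb{H}(X_{i+1}\mid X_i).\]
By Lemma~\ref{lem:condIndep}, it suffices to prove that $X_{i+1}$ and $(X_0,\dots,X_{i-1})$ are conditionally independent given $X_i$, i.e.\ that the uniform random homomorphism from a path has the Markov property. The plan for this is a direct counting argument: for any $u\in V(G)$, the number of homomorphisms $g\in\Hom(P_{2k+1}^A,G)$ with $g(v_i)=u$ factors as $L_i(u)\cdot R_i(u)$, where $L_i(u)$ counts colour-respecting partial homomorphisms of the sub-path $v_0\cdots v_i$ ending at $u$ and $R_i(u)$ counts those of the sub-path $v_i\cdots v_{2k+1}$ starting at $u$. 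Consequently, conditional on $X_i=u$, the joint distribution of $(X_0,\dots,X_{i-1})$ and $(X_{i+1},\dots,X_{2k+1})$ factors as a product of the corresponding conditional distributions, yielding the desired conditional independence.

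Having established the Markov property, substituting into the Chain Rule expansion gives
\[\mathbb{H}(f)=\mathbb{H}(X_0)+\sum_{i=0}^{2k}\mathbb{H}(X_{i+1}\mid X_i).\]
Applying the Chain Rule once more to each pair yields $\mathbb{H}(X_{i+1}\mid X_i)=\mathbb{H}(X_i,X_{i+1})-\mathbb{H}(X_i)$, so after summing and observing that the $\mathbb{H}(X_0)$ term cancels with the $i=0$ contribution to $\sum\mathbb{H}(X_i)$, one obtains exactly $\sum_{i=0}^{2k}\mathbb{H}(X_i,X_{i+1})-\sum_{i=1}^{2k}\mathbb{H}(X_i)$, as claimed.

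The main obstacle is the conditional independence step: while it is intuitively ``clear'' that uniform random homomorphisms from a tree enjoy a Markov property, one must carefully verify the factorization of probabilities, in particular checking both directions of Definition~\ref{def:condIndep} and handling the ranges $\rng(X\mid Y=y)$ properly so that all conditional probabilities involved are well-defined. Once this bookkeeping is done, the remainder of the argument is a routine manipulation of the Chain Rule.
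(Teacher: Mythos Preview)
Your proposal is correct and follows essentially the same approach as the paper's proof: apply the Chain Rule to $(X_0,\dots,X_{2k+1})$, use conditional independence to reduce each $\mathbb{H}(X_{i+1}\mid X_0,\dots,X_i)$ to $\mathbb{H}(X_{i+1}\mid X_i)$, and then rewrite via the Chain Rule as $\mathbb{H}(X_i,X_{i+1})-\mathbb{H}(X_i)$. The only difference is that you spell out the Markov property via the factorization $L_i(u)\cdot R_i(u)$, whereas the paper simply asserts it ``using the fact that $f$ is uniform''; this is a minor expository addition, not a different route.
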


\begin{proof}
The random variables $f$ and $(X_0,X_1,\dots,X_{2k+1})$ encode the same information (i.e. each of them is uniquely determined by the other) and so they have the same entropy, simply by Definition~\ref{defn:entropy}. So, by the Chain Rule (Lemma~\ref{lem:chain}),
\begin{equation}\label{eq:CRpath}\mathbb{H}(f) = \mathbb{H}(X_0) + \sum_{i=1}^{2k+1}\mathbb{H}(X_i\mid X_0,X_1,\dots,X_{i-1}).\end{equation}
Now, we analyze the terms $\mathbb{H}(X_i\mid X_0,X_1,\dots,X_{i-1})$ for $i\geq2$. Obviously, the choice of $X_i$ must depend on the choice of $X_{i-1}$, as these two vertices must be adjacent in $G$ via an edge of the correct colour. However, using the fact that $f$ is uniform, we observe that the choices of $X_i$ and $(X_0,\dots,X_{i-2})$ are conditionally independent given $X_{i-1}$. That is, if we have prior knowledge of the outcome of $X_{i-1}$, the probability of any given outcome of $X_i$ is agnostic to the outcomes of $X_0,\dots,X_{i-2}$, and vice versa. Therefore, \eqref{eq:CRpath} can be rewritten as 
\[\mathbb{H}(f) = \mathbb{H}(X_0) + \sum_{i=1}^{2k+1}\mathbb{H}(X_i\mid X_{i-1}).\]
Applying the Chain Rule to $\mathbb{H}(X_{i-1},X_i)$ for $1\leq i\leq 2k+1$ and doing a bit of rearranging yields $\mathbb{H}(X_i\mid X_{i-1}) = \mathbb{H}(X_i,X_{i-1}) -\mathbb{H}(X_{i-1})$. The result now follows by substituting this in, cancelling the $\mathbb{H}(X_0)$ terms and shifting the index of the summation.
\end{proof}

So far, we have developed plenty of tools for analyzing the entropy of a given random variable, but, to prove inequalities as in \eqref{eq:PH}, we will also need to be able to build one tuple of random variables from another in such a way that the entropy of the new random variables can be compared to the entropy of those that they are built from. The following lemma is a standard tool for achieving this; see, e.g.,~\cite[Lemma~2.5]{Lee21}. We include a proof for completeness.

\begin{lem}[Distribution Gluing Lemma]
\label{lem:glue}
Let $(A,B_1,B_2,C)$ be a tuple of discrete random variables such that $B_1$ and $B_2$ are identically distributed. Then there exists a random variable $C'$ such that $(B_1,C')$ and $(B_2,C)$ are identically distributed and $A$ and $C'$ are conditionally independent given $B_1$. 
\end{lem}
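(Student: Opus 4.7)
The approach is the standard extension-of-probability-space construction: I would ``copy'' the conditional distribution of $C$ given $B_2$ over onto $B_1$ using fresh randomness that is independent of everything else. The intuition is that such a copy automatically forces $(B_1,C')$ to mimic $(B_2,C)$ marginally, while using only independent randomness as a bridge between $A$ and $C'$ beyond $B_1$ itself will deliver the required conditional independence.

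More precisely, for each $b\in \rng(B_2)$ and each value $c$, set $K(c\mid b):=\mathbb{P}(C=c\mid B_2=b)$; this is a well-defined probability kernel on all of $\rng(B_1)$ because $B_1$ and $B_2$ have the same range. I would then enlarge the probability space by adjoining a random variable $U$ that is uniform on $[0,1]$ and independent of $(A,B_1,B_2,C)$, and define $C':=g(B_1,U)$ for some measurable $g$ chosen so that $g(b,U)\sim K(\cdot\mid b)$ for every $b$. Equivalently, I would specify the joint law of $(A,B_1,B_2,C,C')$ by declaring
\[ \mathbb{P}(C'=c'\mid A=a, B_1=b_1, B_2=b_2, C=c) = K(c'\mid b_1). \]

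Both conclusions can then be verified by direct manipulation of conditional probabilities. Marginalizing $A$, $B_2$, $C$ out of the joint law gives $\mathbb{P}(B_1=b, C'=c')=\mathbb{P}(B_1=b)\cdot K(c'\mid b)$, which equals $\mathbb{P}(B_2=b, C=c')$ by the hypothesis that $B_1$ and $B_2$ are identically distributed together with the definition of $K$; this establishes $(B_1,C')\stackrel{d}{=}(B_2,C)$. For conditional independence, computing $\mathbb{P}(C'=c'\mid A=a, B_1=b)$ straight from the construction yields $K(c'\mid b)$, which does not depend on $a$, so Definition~\ref{def:condIndep} is satisfied (the symmetric clause of that definition follows from the same expression via Bayes' rule). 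The only genuine pitfall, and really the only conceptual step, is ensuring that $C'$ refers to $(A,B_2,C)$ \emph{only through $B_1$}: any construction in which the definition of $C'$ reads information directly from $A$, $B_2$ or $C$ beyond what $B_1$ reveals would break the conditional-independence clause. Routing the definition through the kernel $K(\cdot\mid B_1)$ and an auxiliary independent variable $U$ handles this automatically.
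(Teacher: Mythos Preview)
Your proposal is correct and is essentially the same construction as the paper's: define $C'$ by sampling from the conditional law $\mathbb{P}(C=\cdot\mid B_2=b)$ evaluated at $b=B_1$, using fresh randomness independent of everything else, and then verify the two conclusions directly. The only cosmetic difference is that you make the probability-space extension explicit via an auxiliary uniform variable $U$, whereas the paper simply says ``sample $C'$ according to the distribution of $C$ conditioned on $B_2=B_1$''; the verifications are the same.
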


\begin{proof}
First, sample the pair $(A,B_1)$ according to its joint distribution. Next, given the outcome of $(A,B_1)$, we sample $C'$ according to the distribution of $C$ conditioned on $B_2=B_1$. That is, if, after sampling $(A,B_1)$, we have $B_1=b$, then we choose $C'$ randomly where $C'$ is equal to $c$ with probability $\mathbb{P}(C=c\mid B_2=b)$ for all $c\in \rng(C\mid B_2=b)$. Note that the expression $\mathbb{P}(C=c\mid B_2=b)$ is well-defined with probability one because $B_1$ and $B_2$ are identically distributed discrete random variables, and so the event $B_1=b$ has no probability of occurring unless $\mathbb{P}(B_1=b)=\mathbb{P}(B_2=b)>0$. 

We verify that $C'$ has the desired properties. For any $b,c$ where $b\in \rng(B_1)$, we have 
\begin{align*}
\mathbb{P}(B_1=b,C'=c)&= \mathbb{P}(B_1=b)\mathbb{P}(C'=c\mid B_1=b)\\
&=\mathbb{P}(B_2=b)\mathbb{P}(C=c\mid B_2=b)\\
&=\mathbb{P}(B_2=b,C=c)
\end{align*}
where the penultimate equality used the definition of $C'$ and the fact that $B_1$ and $B_2$ are identically distributed. Thus, $(B_1,C')$ and $(B_2,C)$ are identically distributed. For any $b\in\rng(B_1)$, $a\in \rng(A\mid B_1=b)$ and $c\in \rng(C'\mid B_1=b)$, we have
\[\mathbb{P}(A=a\mid B_1=b,C'=c) = \mathbb{P}(A=a\mid B_1=b)\]
simply because $A$ was sampled before $C'$, with no prior knowledge of the outcome of $C'$, and therefore conditioning on the outcome of $C'$ cannot affect $A$ if we have also conditioned on the outcome of $B_1$. Also, by construction of $C'$, we have
\[\mathbb{P}(C'=c\mid B_1=b,A=a) = \mathbb{P}(C=c\mid B_2=b)=\mathbb{P}(C'=c\mid B_1=b).\]
Thus, $A$ and $C'$ are conditionally independent given $B_1$.
\end{proof}

Before proving the general lemma that will be used to prove \eqref{eq:PH} for all $k$, let us first illustrate how it works by providing a more rigorous definition of the random variable that was constructed for the warm-up example in Section~\ref{sec:sketch} and analyzing its entropy. 

\begin{ex}
\label{ex:entropyEx}
Recall that $H_3$ is the forest in the second picture in Figure~\ref{fig:H3}, $f$ is a uniformly random homomorphism from $P_3^A$ to an edge-coloured graph $G$ and that $X_i:=f(v_i)$ for $0\leq i\leq 3$. Since $f$ is uniform and $(X_0,X_1,X_2,X_3)$ encodes the same information as $f$, we have, by Lemma~\ref{lem:unif}, that
\begin{equation}\label{eq:unifEx}\mathbb{H}(X_0,X_1,X_2,X_3) = \mathbb{H}(f)=\log_2(\hom(P_3^A,G)).\end{equation}
We construct a tuple of random variables $(Y_v: v\in V(H_3))$ indexed by the vertices of $H_3$ with the properties that
\begin{enumerate}[(a)]
    \item\label{eq:homSupported} the map $g:V(H_3)\to V(G)$ defined by $g(v)=Y_v$ is a homomorphism from $H_3$ to $G$ with probability one and
    \item\label{eq:7} $\mathbb{H}(Y_v: v\in V(H_3)) = 7\cdot \mathbb{H}(X_0,X_1,X_2,X_3)$.
\end{enumerate}
The construction is built up iteratively as follows. First, we let $(Y_{v_0},Y_{v_1},Y_{v_2},Y_{v_3})$ be chosen randomly with the same distribution as $(X_0,X_1,X_2,X_3)$. 

Next, consider a leaf $v$ of $H_3$ that is adjacent to $v_1$ via a red edge. To define $Y_v$, we apply Lemma~\ref{lem:glue} with $A=(Y_{v_0},Y_{v_2},Y_{v_3})$, $B_1=Y_{v_1}$, $B_2=X_1$ and $C=X_0$ and take $Y_v$ to be the random variable $C'$ resulting from the lemma. Note that this is a valid application of the lemma because $Y_{v_1}$ has the same distribution as $X_1$. From the lemma, we will have that $(Y_v,Y_{v_1})$ has the same distribution as $(X_0,X_1)$ and that $Y_v$ and $(Y_{v_0},Y_{v_2},Y_{v_3})$ are conditionally independent given $Y_{v_1}$. Therefore, by Lemmas~\ref{lem:chain} and~\ref{lem:condIndep},
\[\mathbb{H}(Y_{v_0},Y_{v_1},Y_{v_2},Y_{v_3},Y_v) = \mathbb{H}(X_0,X_1,X_2,X_3) + \mathbb{H}(X_0\mid X_1)\]
which, by another application of the Chain Rule on the second term, is equal to 
\[\mathbb{H}(Y_{v_0},Y_{v_1},Y_{v_2},Y_{v_3},Y_v) = \mathbb{H}(X_0,X_1,X_2,X_3) + \mathbb{H}(X_0, X_1) - \mathbb{H}(X_1).\]
In an analogous way,  by repeatedly applying Lemma~\ref{lem:glue}, we can continue to build up $Y_v$ for all of the rest of the leaves $v$ of $H_3$ that are adjacent to $v_1$ in such a way that every such variable $Y_v$ is conditionally independent of all other variables, apart from $Y_{v_1}$, given $Y_{v_1}$. The same thing can then be done for all of the leaves that are adjacent to $v_2$. After all of these steps, the entropy of the tuple of all variables chosen so far is
\begin{align*}\mathbb{H}(X_0,X_1,X_2,X_3)& + 6\cdot\mathbb{H}(X_0, X_1) + 3\cdot\mathbb{H}(X_1,X_2)- 9\cdot\mathbb{H}(X_1) \\&+ 4\cdot\mathbb{H}(X_2,X_3)+2\cdot\mathbb{H}(X_1,X_2) - 6\cdot\mathbb{H}(X_2).
\end{align*}
After this, if $u$ and $v$ are the two vertices of the isolated blue edge of $H_3$, then we take $(Y_u,Y_v)$ to be distributed in the same way as $(X_1,X_2)$, independent of all previous choices. Similarly, the vertices of each of the two isolated red edges correspond to independent copies of $(X_2,X_3)$. Finally, for each of the three isolated vertices $v$, we let $Y_v$ be a copy of $X_1$ chosen independently of everything that was done so far. This completes the definition of $(Y_v: v\in V(H_3))$. It is clear that all of these choices maintain property \ref{eq:homSupported}. In the end, the entropy of this random variable is 
\begin{align*}\mathbb{H}(X_0,X_1,X_2,X_3)& + 6\cdot\mathbb{H}(X_0, X_1) + 3\cdot\mathbb{H}(X_1,X_2)- 9\cdot\mathbb{H}(X_1) \\&+ 4\cdot\mathbb{H}(X_2,X_3)+2\cdot\mathbb{H}(X_1,X_2) - 6\cdot\mathbb{H}(X_2)\\&+\mathbb{H}(X_1,X_2)+2\cdot\mathbb{H}(X_2,X_3)+3\cdot\mathbb{H}(X_1)
\end{align*}
which, after collecting like terms, is equal to $\mathbb{H}(X_0,X_1,X_2,X_3)$ plus
\[6\cdot \mathbb{H}(X_0,X_1)+6\cdot\mathbb{H}(X_1,X_2)+6\cdot\mathbb{H}(X_2,X_3)+6\cdot\mathbb{H}(X_3,X_4) - 6\cdot\mathbb{H}(X_1) - 6\cdot\mathbb{H}(X_2).\]
By Proposition~\ref{prop:pathEntropy}, the above expression is equal to $6\cdot\mathbb{H}(X_0,X_1,X_2,X_3)$. Thus, \ref{eq:7} is satisfied. Now, by Lemma~\ref{lem:unif}, \eqref{eq:unifEx} and properties \ref{eq:homSupported} and \ref{eq:7},
\[\log_2(\hom(H_3,G)) \geq \mathbb{H}(Y_v:v\in V(H_3)) = 7\cdot\mathbb{H}(X_0,X_1,X_2,X_3) = 7\cdot\log_2(\hom(P_3^A,G))\]
and so $\hom(H_3,G)\geq \hom(P_3^A,G)^7$. Now, dividing both sides by $n^{v(H_3)} = n^{7\cdot v(P_3^A)}$ yields the inequality in \eqref{eq:PH}.
\end{ex}

We conclude this section with a lemma which generalizes the previous example. In order to state it, we need a couple of definitions. Given edge-coloured graphs $H$ and $F$, a vertex $v\in V(F)$, a homomorphism $\varphi$ from $H$ to $F$ and an integer $m$, we say that $\varphi$ \emph{covers $v$ exactly $m$ times} if $|\varphi^{-1}(v)|=m$. Analogously, for an edge $e=uv\in E(F)$, we say that $\varphi$ \emph{covers $e$ exactly $m$ times} if the set $\varphi^{-1}(e):=\{ab\in E(H): \{\varphi(a),\varphi(b)\}=\{u,v\}\}$ has cardinality $m$. 

\begin{lem}
\label{lem:entropy}
If $H$ is a non-empty edge-coloured forest such that there exists a homomorphism $\varphi$ from $H$ to $P_{2k+1}^A$ which covers every edge and vertex of $P_{2k+1}^A$ exactly $e(H)/e(P_{2k+1}^A)$ times, then
\[t(P_{2k+1}^A,G)^{1/e(P_{2k+1}^A)}\leq t(H,G)^{1/e(H)}\]
for every edge-coloured graph $G$.
\end{lem}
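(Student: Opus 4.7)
The plan is to generalize Example~\ref{ex:entropyEx} to arbitrary $k$ and arbitrary forests $H$ satisfying the coverage hypothesis. Assume $\hom(P_{2k+1}^A,G) \geq 1$ (else the inequality is trivial) and set $m := e(H)/e(P_{2k+1}^A)$. The hypothesis forces every vertex of $P_{2k+1}^A$ to have exactly $m$ $\varphi$-preimages, so $v(H) = (2k+2)m$ and, since $H$ is a forest, its number of components equals $v(H) - e(H) = m$. For each $v \in V(H)$, let $i_v \in \{0,1,\dots,2k+1\}$ be the index with $\varphi(v) = v_{i_v}$. Let $f$ be a uniformly random element of $\Hom(P_{2k+1}^A,G)$ and set $X_i := f(v_i)$. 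The aim is to build a random tuple $(Y_v)_{v \in V(H)}$ of $V(G)$-valued random variables such that \textbf{(a)} $v \mapsto Y_v$ lies in $\Hom(H,G)$ with probability one, and \textbf{(b)} $\mathbb{H}((Y_v)_v) = m \cdot \mathbb{H}(X_0,\dots,X_{2k+1})$. Granted these, Lemma~\ref{lem:unif} gives $\log_2\hom(H,G) \geq m\log_2\hom(P_{2k+1}^A,G)$; raising to the $1/e(H)$ power and using $v(H)/e(H) = (2k+2)/(2k+1) = v(P_{2k+1}^A)/e(P_{2k+1}^A)$ makes the normalizing factors $n^{v(\cdot)/e(\cdot)}$ agree, yielding the claimed density inequality.

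For the construction, pick one root in each of the $m$ components of $H$ and process vertices in a rooted-tree order. Each root $r$ is sampled independently of everything else from the marginal distribution of $X_{i_r}$. For each non-root $v$ with parent $u$, apply Lemma~\ref{lem:glue} with $A$ equal to all previously-sampled $Y$-values, $B_1 = Y_u$, $B_2 = X_{i_u}$, and $C = X_{i_v}$, setting $Y_v := C'$. A straightforward induction shows $Y_u$ and $X_{i_u}$ are identically distributed, justifying the gluing, and the lemma then guarantees that $(Y_u,Y_v)$ has the same joint distribution as $(X_{i_u}, X_{i_v})$. Since $f$ is a homomorphism of $P_{2k+1}^A$ into $G$ and $\varphi$ is a homomorphism, $Y_uY_v$ is almost surely an edge of $G$ of the correct colour, which yields \textbf{(a)}.

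The main obstacle will be proving \textbf{(b)}. The Chain Rule together with the conditional independence supplied by Lemma~\ref{lem:glue} gives
\begin{equation*}
\mathbb{H}((Y_v)_v) \;=\; \sum_{r\text{ root}} \mathbb{H}(X_{i_r}) \;+\; \sum_{v\text{ non-root}} \Bigl[\mathbb{H}\bigl(X_{i_{\mathrm{par}(v)}}, X_{i_v}\bigr) - \mathbb{H}\bigl(X_{i_{\mathrm{par}(v)}}\bigr)\Bigr].
\end{equation*}
The edge-coverage hypothesis immediately converts the bivariate-entropy part of the second sum into $m \sum_{i=0}^{2k}\mathbb{H}(X_i, X_{i+1})$. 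The delicate point is the vertex-term cancellation. Let $\rho_j$ denote the number of roots $r$ with $i_r = j$ and $\pi_j$ the number of non-roots $v$ with $i_{\mathrm{par}(v)} = j$. Counting, for each vertex $u$ with $i_u = j$, its children (which number $d(u)$ if $u$ is a root and $d(u)-1$ otherwise) gives $\pi_j = D_j - (m-\rho_j)$, where $D_j$ is the total degree in $H$ of the $m$ vertices with $i_v = j$. Consequently, the coefficient of $\mathbb{H}(X_j)$ is $\rho_j - \pi_j = m - D_j$, which is independent of the chosen rooting. The edge-coverage hypothesis forces $D_0 = D_{2k+1} = m$ and $D_j = 2m$ for $1 \leq j \leq 2k$, so this coefficient is $0$ at the endpoints and $-m$ elsewhere. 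Combining with Proposition~\ref{prop:pathEntropy} then collapses the full expression to $m \cdot \mathbb{H}(X_0,\dots,X_{2k+1})$, establishing \textbf{(b)}.
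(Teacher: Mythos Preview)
Your proof is correct and follows essentially the same approach as the paper: root each component, build the $Y_v$ via the Distribution Gluing Lemma, expand the entropy by the Chain Rule, and then use the edge- and vertex-coverage hypotheses together with Proposition~\ref{prop:pathEntropy} to collapse the expression to $m\cdot\mathbb{H}(X_0,\dots,X_{2k+1})$. Your bookkeeping of the $\mathbb{H}(X_j)$-coefficients via $\rho_j,\pi_j,D_j$ is a slightly different packaging of the same computation the paper carries out with the $(d(v)-1)$-weighted sum, and your explicit observation that $H$ has exactly $m$ components (hence $v(H)/e(H)=v(P_{2k+1}^A)/e(P_{2k+1}^A)$) makes the final density normalization a touch cleaner than in the paper.
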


\begin{proof}
Let $G$ be an arbitrary edge-coloured graph. In the case that $\hom(P_{2k+1}^A,G)=0$ the inequality in the lemma holds trivially. So, from here forward, we assume that $\hom(P_{2k+1}^A,G)\geq1$. Thus, we can let $f$ be a uniformly random homomorphism from $P_{2k+1}^A$ to $G$ and, for $0\leq i\leq 2k+1$, define $X_i:=f(v_i)$. As in Example~\ref{ex:entropyEx}, our goal is to construct a random variable $(Y_v: v\in V(H))$ such that
\begin{enumerate}[(a)]
    \item\label{eq:homSupportedGeneral} the map $g:V(H)\to V(G)$ defined by $g(v)=Y_v$ is a homomorphism from $H$ to $G$ with probability one and
    \item\label{eq:same} $\mathbb{H}(Y_v: v\in V(H)) = (e(H)/e(P_{2k+1}^A))\cdot \mathbb{H}(X_0,\dots,X_{2k+1})$.
\end{enumerate}

For each component $C$ of $H$, let $r_C$ be an arbitrary vertex of $V(C)$, called the \emph{root} of $C$. For each non-root vertex $v$ of $H$, let $p(v)$ be the unique neighbour of $v$ which is closer to a root than $v$ is; we call $p(v)$ the \emph{parent} of $v$ and say that $v$ is the \emph{child} of $p(v)$.

Now, we build a random variable $(Y_v: v\in V(H))$ by repeatedly applying Lemma~\ref{lem:glue} as follows. For each component $C$, one by one, start by selecting $Y_{r_C}$ according to the distribution of $X_{\varphi(r_C)}$ independently of all choices made so far. Then, for each child $v$ of $r_C$, one by one, we apply Lemma~\ref{lem:glue} with the role of $A$ played by the tuple of all variables $Y_u$ defined so far, except for $Y_{r_C}$, the role of $B_1$ played by $Y_{r_C}$, the role of $B_2$ played by $X_{\varphi(r_C)}$ and the role of $C$ played by $X_{\varphi(v)}$. We take $Y_v$ to be the variable $C'$ resulting from this. After exhausting all children of $r_C$, we continue in the same way on the children of each child of $r_C$ following the order of a standard ``breadth-first search.'' Once $Y_v$ has been defined for every vertex $v\in V(C)$, we then move on to another component, and so on, in such a way that variables corresponding to vertices in different components of $H$ are independent of one another.

The random variable $(Y_v:v\in V(H))$ satisfies \ref{eq:homSupportedGeneral} simply by construction. Let us show that it satisfies \ref{eq:same}. By Lemmas~\ref{lem:chain} and~\ref{lem:condIndep}, $\mathbb{H}(Y_v: v\in V(H))$ is equal to
\begin{equation}\label{eq:sumOverC}\sum_C\left(\mathbb{H}(Y_{r_C}) + \sum_{v\in V(C)\setminus\{r_C\}}\mathbb{H}(Y_v\mid Y_{p(v)})\right)\end{equation}
where the outer summation is over all components $C$ of $H$. Applying the Chain Rule again, we see that, for any non-root $v$,
\begin{equation}\label{eq:YCR}\mathbb{H}(Y_v\mid Y_{p(v)})=\mathbb{H}(Y_v, Y_{p(v)}) - \mathbb{H}(Y_{p(v)}) = \mathbb{H}(X_{\varphi(v)},X_{\varphi(p(v))})-\mathbb{H}(X_{\varphi(p(v))}).\end{equation} 
Since $H$ is a forest, every edge of $H$ is between some vertex $v$ and its parent. Also, we observe that each root vertex $r_C$ is the parent of exactly $d(r_C)$ vertices and each non-root vertex $v$ of $H$ is the parent of $d(v)-1$ vertices, where $d(v)$ denotes the total degree of $v$ (including edges of both colours). So, by substituting \eqref{eq:YCR} into \eqref{eq:sumOverC} and collecting terms, we see that 
\[\mathbb{H}(Y_v: v\in V(H))=\sum_{uv\in E(H)}\mathbb{H}(X_{\varphi(u)},X_{\varphi(v)}) - \sum_{v\in V(H)}(d(v)-1)\mathbb{H}(X_{\varphi(v)}).\]
Since $\varphi$ covers every edge of $P_{2k+1}^A$ exactly $e(H)/e(P_{2k+1}^A)$ times, the first summation in the above expression can be expressed as follows:
\[\sum_{uv\in E(H)}\mathbb{H}(X_{\varphi(u)},X_{\varphi(v)}) = (e(H)/e(P_{2k+1}^A))\sum_{i=0}^{2k}\mathbb{H}(X_i,X_{i+1}).\]
Now, let us analyze the second summation. We have 
\begin{align*}
\sum_{v\in V(H)}(d(v)-1)\mathbb{H}(X_{\varphi(v)}) &= \sum_{v\in V(H)}d(v)\mathbb{H}(X_{\varphi(v)})-\sum_{v\in V(H)}\mathbb{H}(X_{\varphi(v)})\\ &=\sum_{v\in V(H)}\sum_{\substack{e\in E(H)\\ v\in e}}\mathbb{H}(X_{\varphi(v)}) -\sum_{v\in V(H)}\mathbb{H}(X_{\varphi(v)}).
\end{align*}
Each of the two terms in the final expression above is nothing more than a linear combination of the values $\mathbb{H}(X_i)$ for $0\leq i\leq 2k+1$. We analyze these linear combinations separately. For the first one, for each $0\leq i\leq 2k+1$, the coefficient of $\mathbb{H}(X_i)$ is equal to the number of edges of $H$ that are mapped to the edges of $P_{2k+1}^A$ that are incident to vertex $i$. So, since $\varphi$ covers every edge exactly $e(H)/e(P_{2k+1}^A)$ times, the coefficient of $\mathbb{H}(X_i)$ in the first term is equal to $e(H)/e(P_{2k+1}^A)$ or $2(e(H)/e(P_{2k+1}^A))$, depending on whether vertex $i$ is a leaf of the path or not. On the other hand, since $\varphi$ covers every vertex exactly $e(H)/e(P_{2k+1}^A)$ times as well, the coefficient of $\mathbb{H}(X_i)$ in the second term is equal to $e(H)/e(P_{2k+1}^A)$ for all $0\leq i\leq 2k+1$. Putting all of this together and applying Proposition~\ref{prop:pathEntropy}, we get that \ref{eq:same} holds. 

Finally, applying \ref{eq:homSupportedGeneral}, \ref{eq:same} and Lemma~\ref{lem:unif} (in two different ways), we obtain
\begin{align*}\log_2(\hom(H,G)) \geq \mathbb{H}(Y_v: v\in V(H)) &= (e(H)/e(P_{2k+1}^A))\mathbb{H}(X_0,\dots,X_{2k+1})\\ &= (e(H)/e(P_{2k+1}^A))\log_2(\hom(P_{2k+1}^A,G)).\end{align*}
The inequality in the lemma now follows from taking 2 to the power of both sides and dividing by $n^{v(H)}$, which is equal to $n^{v(P_{2k+1}^A)(e(H)/e(P_{2k+1}^A))}$ because $\varphi$ covers ever vertex exactly $e(H)/e(P_{2k+1}^A)$ times. 
\end{proof}

\section{The Technical Construction}
\label{sec:construction}

The final step in our proof of Theorem~\ref{thm:paths} is to construct the edge-coloured forest $H_{2k+1}$ as in Lemma~\ref{lem:key}. To establish the lower bound \eqref{eq:PH}, we will apply Lemma~\ref{lem:entropy}. Therefore, in addition to the edge-coloured forest $H_{2k+1}$, we will need a homomorphism $\varphi_{2k+1}$ from $H_{2k+1}$ to $P_{2k+1}^A$ covering every edge and vertex the same number of times. Moreover, we want to construct $H_{2k+1}$ in such a way that the upper bound \eqref{eq:HP} can be proven via simple local counting arguments that are only slightly more complicated than those that were used for the warm up example in Section~\ref{sec:sketch}. 

Our first step will be to define, for each $k\geq1$, three sequences $\vec{x}=(x_0,\dots,x_{2k+1})$, $\vec{y}=(y_0,\dots,y_{2k})$ and $\vec{z}=(z_0,\dots,z_{2k+1})$. The quantities $x_i$ and $z_i$ will control the number of pendant edges and $2$-edge paths attached to the vertex $v_i$ during the construction of $H_{2k+1}$, while $y_i$ controls the number of disjoint ``copies'' of the edge $v_iv_{i+1}$ added. It may be useful for the reader to refer back to the construction of $H_5$ in Figure~\ref{fig:H5} to get a sense of what we mean by this.

\begin{defn}
\label{defn:sequences}
For $k\geq1$, define the vectors $\vec{x}=(x_0,\dots,x_{2k+1})$, $\vec{y}=(y_0,\dots,y_{2k})$ and $\vec{z}=(z_0,\dots,z_{2k+1})$ as follows. Each vector is symmetric, meaning $x_i=x_{2k+1-i}, y_{i}=y_{2k-i}$ and $z_i=z_{2k+1-i}$ for $i\in\{0,1,\ldots,k\}$; hence, it suffices to define their entries in the first half. The definitions depend on the parity of $k$, with small cases treated separately.\\

\noindent
If $k=1$, then define

\noindent
\begin{minipage}[t]{0.48\textwidth}
\begin{itemize}
    \item\label{eq:x3} \( x_0 := 0, \ x_1 := 4 \),
\end{itemize}

\begin{itemize}
    \item\label{eq:z3} \( z_0 = z_1 := 0 \).
\end{itemize}
\end{minipage}%
\hfill
\begin{minipage}[t]{0.48\textwidth}
\begin{itemize}
    \item\label{eq:y3} \( y_0 = y_1 := 2 \),
\end{itemize}
\end{minipage}
\\

\noindent
If $k=2$, then define  

\noindent
\begin{minipage}[t]{0.48\textwidth}
\begin{itemize}
    \item\label{eq:x5} $x_0 := 0,\ x_1 := 16,\ x_2 := 10$,
\end{itemize}
\begin{itemize}
    \item\label{eq:z5} $z_0 := 0,\ z_1 := 0,\ z_2 := 5$.
\end{itemize}
\end{minipage}%
\hfill
\begin{minipage}[t]{0.48\textwidth}
\begin{itemize}
    \item\label{eq:y5} $y_0 := 14,\ y_1 := 1,\ y_2 := 0$,
\end{itemize}
\end{minipage}
\\

\noindent
If $k=3$, then define

\noindent
\begin{minipage}[t]{0.48\textwidth}
\begin{itemize}
\item\label{eq:x7} $x_0:=0, \ x_1:=48,\ x_2:=40,\ x_3:=28$,
\end{itemize}
\begin{itemize}
\item\label{eq:y7} $y_0:=36,\ y_1:=4,\ y_2:=2,\ y_3:=0$.
\end{itemize}
\end{minipage}%
\hfill
\begin{minipage}[t]{0.48\textwidth}
\begin{itemize}
\item\label{eq:z7} $z_0:=0,\ z_1:=0,\ z_2:=0,\ z_3:=14$, 
\end{itemize}
\end{minipage}
\\

\noindent
If $k$ is odd and $k\geq 5$, for each integer $0\leq i\leq \left\lfloor\frac{k-5}{4}\right\rfloor$, define

\noindent
\begin{minipage}[t]{0.48\textwidth}
\begin{itemize}
    \item\label{eq:13x0} \( x_{4i} := \begin{cases}0 & \text{if } i = 0,\\ (k+1)(k^2 - i) & \text{otherwise} \end{cases} \),
\end{itemize}
\begin{itemize}
    \item\label{eq:13x1} \( x_{4i+1} := (k+1)(k^2 + k - i) \),
\end{itemize}
\begin{itemize}
    \item\label{eq:13x2} \( x_{4i+2} := (k+1)(k^2 + i) \),
\end{itemize}
\begin{itemize}
    \item\label{eq:13x3} \( x_{4i+3} := (k+1)(k^2 - k + i) \),
\end{itemize}
\begin{itemize}
    \item\label{eq:13z0} \( z_{4i} := (2k + 1)i \),
\end{itemize}
\begin{itemize}
    \item\label{eq:13z2} \( z_{4i+2} := 0 \),
\end{itemize}
\end{minipage}%
\hfill
\begin{minipage}[t]{0.48\textwidth}
\begin{itemize}
    \item\label{eq:13y0} \( y_{4i} := \begin{cases}k^2(k+1) & \text{if } i = 0,\\ i & \text{otherwise} \end{cases} \),
\end{itemize}
\begin{itemize}
    \item\label{eq:13y1} \( y_{4i+1} := 0 \),
\end{itemize}
\begin{itemize}
    \item\label{eq:13y2} \( y_{4i+2} := k - i \),
\end{itemize}
\begin{itemize}
    \item\label{eq:13y3} \( y_{4i+3} := 0 \),
\end{itemize}
\begin{itemize}
    \item\label{eq:13z1} \( z_{4i+1} := 0 \),
\end{itemize}
\begin{itemize}
    \item\label{eq:13z3} \( z_{4i+3} := (2k + 1)(k - i) \).
\end{itemize}
\end{minipage}
\\

\noindent
If $k\equiv 1\bmod 4$ and $k\geq 5$, then define \\
\noindent
\begin{minipage}[t]{0.48\textwidth}
\begin{itemize}
    \item\label{eq:1xk-1} \( x_{k-1} := \frac{4k^3 + 3k^2 + 1}{4} \),
\end{itemize}
\begin{itemize}
    \item\label{eq:1yk-1} \( y_{k-1} := \frac{k - 1}{4} \),
\end{itemize}
\begin{itemize}
    \item\label{eq:1zk-1} \( z_{k-1} := \frac{(k - 1)(2k + 1)}{4} \),
\end{itemize}
\end{minipage}%
\hfill
\begin{minipage}[t]{0.48\textwidth}
\begin{itemize}
    \item\label{eq:1xk} \( x_k := \frac{4k^3 + 7k^2 + 4k + 1}{4} \),
\end{itemize}
\begin{itemize}
    \item\label{eq:1yk} \( y_k := \frac{(k + 1)^2}{2} \),
\end{itemize}
\begin{itemize}
    \item\label{eq:1zk} \( z_k := 0 \).
\end{itemize}
\end{minipage}
\\

\noindent
If $k\equiv 3\bmod 4$ and $k\geq 7$, then we define 

\noindent
\begin{minipage}[t]{0.48\textwidth}
\begin{itemize}
    \item\label{eq:3xk-3} \( x_{k-3} := \frac{4k^3 + 3k^2 + 2k + 3}{4} \),
\end{itemize}
\begin{itemize}
    \item\label{eq:3xk-1} \( x_{k-1} := \frac{2k^3 + 3k^2 - 1}{2} \),
\end{itemize}
\begin{itemize}
    \item\label{eq:3yk-3} \( y_{k-3} := \frac{k - 3}{4} \),
\end{itemize}
\begin{itemize}
    \item\label{eq:3yk-1} \( y_{k-1} := \frac{k + 1}{2} \),
\end{itemize}
\begin{itemize}
    \item\label{eq:3zk-3} \( z_{k-3} := (2k + 1)\frac{k - 3}{4} \),
\end{itemize}
\begin{itemize}
    \item\label{eq:3zk-1} \( z_{k-1} := 0 \),
\end{itemize}
\end{minipage}%
\hfill
\begin{minipage}[t]{0.48\textwidth}
\begin{itemize}
    \item\label{eq:3xk-2} \( x_{k-2} := \frac{4k^3 + 7k^2 + 6k + 3}{4} \),
\end{itemize}
\begin{itemize}
    \item\label{eq:3xk} \( x_k := \frac{2k^3 + k^2 - 2k - 1}{2} \),
\end{itemize}
\begin{itemize}
    \item\label{eq:3yk-2} \( y_{k-2} := \frac{(k + 1)^2}{4} \),
\end{itemize}
\begin{itemize}
    \item\label{eq:3yk} \( y_k := 0 \),
\end{itemize}
\begin{itemize}
    \item\label{eq:3zk-2} \( z_{k-2} := 0 \),
\end{itemize}
\begin{itemize}
    \item\label{eq:3zk} \( z_k := \frac{2k^2 + 3k + 1}{2} \).
\end{itemize}
\end{minipage}
\\

\noindent
If $k$ is even and $k\geq 4$, for each integer $0\leq i\leq \left\lfloor\frac{k-4}{4}\right\rfloor$, define

\noindent
\begin{minipage}[t]{0.48\textwidth}
\begin{itemize}
    \item\label{eq:02x0} \( x_{4i} := \begin{cases}0 & \text{if } i = 0,\\ k(k^2 + k + i) & \text{otherwise} \end{cases} \),
\end{itemize}
\begin{itemize}
    \item\label{eq:02x1} \( x_{4i+1} := k((k+1)^2 - i) \),
\end{itemize}
\begin{itemize}
    \item\label{eq:02x2} \( x_{4i+2} := k(k^2 + k - i) \),
\end{itemize}
\begin{itemize}
    \item\label{eq:02x3} \( x_{4i+3} := k(k^2 + i) \),
\end{itemize}
\begin{itemize}
    \item\label{eq:02z0} \( z_{4i} := 0 \),
\end{itemize}
\begin{itemize}
    \item\label{eq:02z2} \( z_{4i+2} := (2k + 1)i \),
\end{itemize}
\end{minipage}%
\hfill
\begin{minipage}[t]{0.48\textwidth}
\begin{itemize}
    \item\label{eq:02y0} \( y_{4i} := \begin{cases}k^2(k+1) & \text{if } i = 0,\\ 0 & \text{otherwise} \end{cases} \),
\end{itemize}
\begin{itemize}
    \item\label{eq:02y1} \( y_{4i+1} := i \),
\end{itemize}
\begin{itemize}
    \item\label{eq:02y2} \( y_{4i+2} := 0 \),
\end{itemize}
\begin{itemize}
    \item\label{eq:02y3} \( y_{4i+3} := k - i \),
\end{itemize}
\begin{itemize}
    \item\label{eq:02z1} \( z_{4i+1} := 0 \),
\end{itemize}
\begin{itemize}
    \item\label{eq:02z3} \( z_{4i+3} := (2k + 1)(k - i) \).
\end{itemize}
\end{minipage}
\\

\noindent
If $k\equiv 0\bmod 4$, then we define

\noindent
\begin{minipage}[t]{0.48\textwidth}
\begin{itemize}
    \item\label{eq:mx00} \( x_{k} := k^2\left(k + \frac{5}{4}\right)  \),
\end{itemize}
\begin{itemize}
    \item\label{eq:mz00} \( z_{k} := 0 \).
\end{itemize}
\end{minipage}%
\hfill
\begin{minipage}[t]{0.48\textwidth}
\begin{itemize}
    \item\label{eq:my00} \( y_{k} := \frac{(k + 2)k}{2} \),
\end{itemize}
\end{minipage}
\\

\noindent
If $k\equiv 2\bmod 4$ and $k \geq 6$, then we define

\noindent
\begin{minipage}[t]{0.48\textwidth}
\begin{itemize}
    \item\label{eq:mx022} \( x_{k-2} := \frac{(4k^2 + 5k - 2)k}{4} \),
\end{itemize}
\begin{itemize}
    \item\label{eq:mx020} \( x_{k} := \frac{k^2(2k + 1)}{2} \),
\end{itemize}
\begin{itemize}
    \item\label{eq:my021} \( y_{k-1} := \frac{k}{2} \),
\end{itemize}
\begin{itemize}
    \item\label{eq:mz022} \( z_{k-2} := 0 \),
\end{itemize}
\begin{itemize}
    \item\label{eq:mz020} \( z_{k} := \frac{(2k + 1)k}{2} \).
\end{itemize}
\end{minipage}%
\hfill
\begin{minipage}[t]{0.48\textwidth}
\begin{itemize}
    \item\label{eq:mx021} \( x_{k-1} := \frac{(2k^2 + 3k + 2)k}{2} \),
\end{itemize}
\begin{itemize}
    \item\label{eq:my022} \( y_{k-2} := \frac{(k + 2)k}{4} \),
\end{itemize}
\begin{itemize}
    \item\label{eq:my020} \( y_k := 0 \),
\end{itemize}
\begin{itemize}
    \item\label{eq:mz021} \( z_{k-1} := 0 \),
\end{itemize}
\end{minipage}
\end{defn}

The following lemma can be easily observed. 
\begin{lem}
For all $k\geq1$, all of the entries of $\vec{x},\vec{y}$ and $\vec{z}$ are non-negative integers.
\end{lem}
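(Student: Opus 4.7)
The plan is to verify both claims---integrality and non-negativity---by direct case-by-case computation, following the piecewise structure of Definition~\ref{defn:sequences}. The small cases $k\in\{1,2,3\}$ are handled by inspection, since every entry is displayed as an explicit non-negative integer. For $k\geq 4$, I would split according to $k\bmod 4$ and, within each residue class, separate the ``generic'' entries indexed by $4i,4i+1,4i+2,4i+3$ for $0\le i\le\lfloor(k-c)/4\rfloor$ (with $c=5$ in the odd range, $c=4$ in the even range) from the ``special'' entries near the middle (with indices $k-3,k-2,k-1,k$).

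For the generic entries, integrality is immediate: each formula is a polynomial in $k$ and $i$ with integer coefficients. Non-negativity then reduces to a handful of simple inequalities against the standing bound on $i$. For example, in the odd $k\ge 5$ range one has $y_{4i+2}=k-i\ge 0$ because $i\le(k-5)/4<k$; $x_{4i+3}=(k+1)(k^2-k+i)\ge 0$ because $k^2-k\ge 0$ for $k\ge 1$; $z_{4i+3}=(2k+1)(k-i)\ge 0$ by the same bound; and the remaining entries are visibly non-negative. The even $k\ge 4$ range is entirely analogous.

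For the special entries, non-negativity is obvious since every numerator is a polynomial in $k$ with positive leading coefficient and $k$ is sufficiently large in that regime. Integrality is a short modular-arithmetic check, keyed to the residue of $k$ modulo $4$. For instance, when $k\equiv 1\pmod 4$ one has $k^2\equiv 1\pmod 4$, so $4k^3+3k^2+1\equiv 4\equiv 0\pmod 4$, giving $x_{k-1}\in\mathbb{Z}$; analogously $4k^3+7k^2+4k+1\equiv 0\pmod 4$ gives $x_k\in\mathbb{Z}$; and $(k+1)^2/2$, $(k-1)/4$, $(k-1)(2k+1)/4$ are integers because $k+1$ is even and $k-1$ is divisible by $4$. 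The case $k\equiv 3\pmod 4$ uses $k^2\equiv 1$, $2k\equiv 2\pmod 4$, and the divisibility of $k-3$ by $4$; the cases $k\equiv 0\pmod 4$ and $k\equiv 2\pmod 4$ use that $k$ (respectively $k+2$) is divisible by $4$, together with the fact that $k$ is even in both.

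The only obstacle is the volume of bookkeeping: roughly eight separate parameter regimes, each containing up to a dozen entries. No single check is conceptually difficult, and the arguments are essentially mechanical.
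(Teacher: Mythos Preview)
Your proposal is correct and follows the only natural approach: direct case-by-case verification of integrality (via modular arithmetic on the residue of $k$ modulo $4$) and non-negativity (via the bound on $i$ and positivity of leading coefficients). The paper itself gives no proof beyond ``can be easily observed,'' so your write-up simply makes explicit the routine checks the authors left to the reader.
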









For convenience, from here forward, we view the sequences $\vec{x},\vec{y},\vec{z}$ as being indexed by $\mathbb{Z}$, where every entry that was not defined in Definition~\ref{defn:sequences} is equal to zero. The next two lemmas will be used to show that our construction $H_{2k+1}$ admits a homomorphism to $P_{2k+1}^A$ that covers every vertex and edge the same number of times. These lemmas are straightforward but tedious to verify; we have included the proofs in an appendix at the end of the paper. 

\begin{lem}
\label{lem:vertexCovered}
For every $k\geq1$ and $0\leq j\leq k$, the following expressions evaluate to $k(k+1)^2(2k+1)$:
\begin{enumerate}[label=\emph{(v\arabic*)}, ref=(v\arabic*)]
\addtocounter{enumi}{-1}
    \item\label{eq:v0} $c(v_{2j}):=kx_{2j-1}+(k+1)x_{2j+1}+(k+1)y_{2j-1}+(k+1)y_{2j}+(k+1)z_{2j-2}+kz_{2j-1}+kz_{2j}+(k+1)z_{2j+1}$,
    \item\label{eq:v1} $c(v_{2j+1}):=(k+1)x_{2j}+kx_{2j+2}+(k+1)y_{2j}+(k+1)y_{2j+1}+(k+1)z_{2j}+kz_{2j+1}+kz_{2j+2}+(k+1)z_{2j+3}$.
\end{enumerate}
\end{lem}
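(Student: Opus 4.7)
My plan is to verify the two identities by direct substitution, organized by a preliminary symmetry reduction and a case split following the clauses of Definition~\ref{defn:sequences}.

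First, I exploit the symmetry built into the sequences: $x_i = x_{2k+1-i}$, $y_i = y_{2k-i}$ and $z_i = z_{2k+1-i}$. In the formula for $c(v_{2j'})$ from \ref{eq:v0}, substituting $j' = k-j$ and rewriting each index via the symmetry relations (so that $x_{2(k-j)-1} = x_{2j+2}$, $x_{2(k-j)+1} = x_{2j}$, $y_{2(k-j)-1} = y_{2j+1}$, $y_{2(k-j)} = y_{2j}$, $z_{2(k-j)-2} = z_{2j+3}$, $z_{2(k-j)-1} = z_{2j+2}$, $z_{2(k-j)} = z_{2j+1}$ and $z_{2(k-j)+1} = z_{2j}$), one checks term-by-term that $c(v_{2(k-j)})$ coincides with the expression for $c(v_{2j+1})$ in \ref{eq:v1}. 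Hence \ref{eq:v1} follows from \ref{eq:v0}, and it suffices to verify $c(v_{2j}) = k(k+1)^2(2k+1)$ for all $0 \leq j \leq k$.

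Next, I split into cases according to which clause of Definition~\ref{defn:sequences} governs each of the entries $x_{2j\pm 1}$, $y_{2j-1}, y_{2j}$ and $z_{2j-2}, z_{2j-1}, z_{2j}, z_{2j+1}$ appearing in \ref{eq:v0}, where indices falling outside the explicitly defined range contribute zero. The small cases $k \in \{1, 2, 3\}$ are dispatched by direct numerical substitution. For $k$ odd with $k \geq 5$, the bulk of the verification reduces to the four periodic subcases $2j \equiv 0, 1, 2, 3 \pmod 4$, in which the relevant entries are given by the formulas for $x_{4i+r}, y_{4i+r}, z_{4i+r}$, yielding in each subcase a polynomial identity in $k$ and the loop index $i$. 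Separately, one handles the boundary indices near $j \approx k/2$ using the clauses reserved for $k \equiv 1 \pmod 4$ or $k \equiv 3 \pmod 4$. The even regime $k \geq 4$ is handled analogously, with its own periodic subcases and boundary clauses for $k \equiv 0, 2 \pmod 4$.

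Within each subcase the verification is elementary polynomial arithmetic: substitute, collect like terms in $k$ and in $i$, and check that the sum simplifies to $k(k+1)^2(2k+1)$. No individual computation requires any ingenuity beyond careful bookkeeping of offsets. The principal obstacle is purely organizational: one must track which clause of the definition governs each of the several shifted indices $2j-2, 2j-1, 2j, 2j+1, 2j+2$ simultaneously, and particular care is needed where the generic periodic formulas give way to the boundary formulas near the middle of the path (and where an index such as $2j+1$ lands in a different clause than $2j-1$). I would present the computations in tabular form in the appendix so that, for each residue class and each boundary subcase, the cancellations leading to $k(k+1)^2(2k+1)$ can be audited at a glance.
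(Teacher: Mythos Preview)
Your approach is essentially the same as the paper's: a direct case analysis following the clauses of Definition~\ref{defn:sequences}, with the small cases $k\in\{1,2,3\}$ done numerically and the generic and boundary subcases verified as polynomial identities. The one genuine addition is your symmetry reduction showing $c(v_{2(k-j)})=c(v_{2j+1})$, which lets you deduce \ref{eq:v1} from \ref{eq:v0}; the paper does not exploit this and instead verifies both \ref{eq:v0} and \ref{eq:v1} separately, so your reduction roughly halves the bookkeeping.

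One small slip: after reducing to \ref{eq:v0} you are only treating even indices $2j$, so there are two periodic subcases $2j\equiv 0,2\pmod 4$, not four; the phrasing ``$2j\equiv 0,1,2,3\pmod 4$'' should be corrected. This does not affect the validity of the plan.
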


\begin{lem}
\label{lem:edgeCovered}
For every $k\geq1$, the following expressions evaluate to $k(k+1)^2(2k+1)$:
\begin{enumerate}[label=\emph{(e\arabic*)}, ref=(e\arabic*)]
\addtocounter{enumi}{-1}
    \item\label{eq:e0} $c(v_{2j},v_{2j+1}):=(k+1)x_{2j} + (k+1)x_{2j+1} + (k+1)y_{2j} + (k+1)z_{2j} + (k+1)z_{2j+1}$ for $0\leq j\leq k$
    \item\label{eq:e1} $c(v_{2j+1},v_{2j+2}):=kx_{2j+1} + kx_{2j+2} + (k+1)y_{2j+1} + (k+1)z_{2j} + 2kz_{2j+1} + 2kz_{2j+2} + (k+1)z_{2j+3}$ for $j\leq k-1$.
\end{enumerate}
\end{lem}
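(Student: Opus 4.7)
The plan is a direct verification by case analysis. The right-hand sides in (e0) and (e1) depend only on a few entries of $\vec x,\vec y,\vec z$ at indices near $j$, and each such entry is given by an explicit piecewise formula in Definition~\ref{defn:sequences}. For each case one substitutes the appropriate formulas and simplifies, showing that the result equals $k(k+1)^2(2k+1)$.

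First, I would dispose of the base cases $k\in\{1,2,3\}$ by direct finite substitution, since for these values $\vec x,\vec y,\vec z$ are listed coordinate-by-coordinate. This reduces to checking only a handful of instances of (e0) and (e1).

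For $k\geq 4$, I would exploit the prescribed symmetries $x_i=x_{2k+1-i}$, $y_i=y_{2k-i}$, $z_i=z_{2k+1-i}$, which by a short calculation imply
\[c(v_{2(k-j)},v_{2(k-j)+1})=c(v_{2j},v_{2j+1}) \qquad\text{and}\qquad c(v_{2(k-1-j)+1},v_{2(k-1-j)+2})=c(v_{2j+1},v_{2j+2}).\]
This halves the range of $j$ that must be checked. I would then split on the parity of $k$, and inside each parity on the residue of the relevant index modulo $4$, matching the piecewise structure of Definition~\ref{defn:sequences}. In the \emph{bulk} range, where all of $2j,2j+1,2j+2,2j+3$ are small enough that every entry is supplied by the generic formulas for $x_{4i+r},y_{4i+r},z_{4i+r}$, substituting and expanding turns (e0) and (e1) into polynomial identities in $k$ and $i$; these reduce to $k(k+1)^2(2k+1)$ by routine algebra (and one notes that most of the $y$ and $z$ entries vanish, so only a small number of nonzero terms survive in each linear combination).

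The main obstacle and source of most of the tedium is the \emph{boundary} region, where one or more of the indices $2j,2j+1,2j+2,2j+3$ falls among the special positions $k-3,k-2,k-1,k$ whose entries are defined by formulas depending on $k\bmod 4$. Here I would organize the verification by the four residue classes of $k$ modulo $4$ (with the small transitional cases $k\in\{4,5,6,7\}$ possibly handled separately), and for each class check (e0) and (e1) for the few values of $j$ that straddle the boundary, as well as the special $i=0$ case where the formulas for $x_{4i}$ and $y_{4i}$ branch. In every such case the check reduces, after substitution, to verifying a single integer identity. No individual computation is hard, but the challenge lies in the bookkeeping: identifying which entries come from the bulk formula and which from the special formulas, and ensuring the symmetry reduction does not double-count the middle index when $k$ is even.
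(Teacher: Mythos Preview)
Your plan is correct and mirrors the paper's own proof: both proceed by direct substitution, dispatching $k\in\{1,2,3\}$ by hand, invoking the symmetry $x_i=x_{2k+1-i}$, $y_i=y_{2k-i}$, $z_i=z_{2k+1-i}$ to restrict attention to indices at most $k$, and then splitting on the parity of $k$ and the residue of the index modulo $4$, with the boundary indices near $k$ handled separately according to $k\bmod 4$. The paper does not in fact need to treat $k\in\{4,5,6,7\}$ as additional special cases, but otherwise your outline coincides with what is carried out there.
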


We are now in position to prove Lemma~\ref{lem:key}. 

\begin{proof}[Proof of Lemma~\ref{lem:key}]
The edge-coloured forest $H_{2k+1}$ is defined in terms of the vectors $\vec{x}=(x_0,\dots,x_{2k+1})$, $\vec{y}=(y_0,\dots,y_{2k})$ and $\vec{z}=(z_0,\dots,z_{2k+1})$ in Definition~\ref{defn:sequences} as follows. 

We start with the alternating path $P_{2k+1}^A$ with vertices $v_0,\dots,v_{2k+1}$. Next, for each vertex $v_j$ for $0\leq j\leq 2k+1$, add two sets of vertices $X_j^R$ and $X_j^B$ such that $|X_j^R|=(k+1)x_j$, $|X_j^B|=kx_j$, every vertex of $X_j^R$ is adjacent to $v_j$ via a red edge and every vertex of $X_j^B$ is adjacent to $v_j$ via a blue edge. Now, for each $0\leq j\leq 2k$, add $(k+1)y_j$ isolated edges of the same colour as the edge $v_jv_{j+1}$; we let $(Y_j^-,Y_j^+)$ be a partition of the vertices added in this step so that each isolated edge has one endpoint in $Y_j^-$ and one in $Y_j^+$. Finally, for each $0\leq j\leq 2k+1$, add sets $Z^R_j$, $Z^{RB}_j$, $Z^B_j$ and $Z^{BB}_j$ of vertices where $|Z^R_j|=|Z^{RB}_j|=(k+1)z_j$, $|Z^B_j|=|Z^{BB}_j|=kz_j$, every vertex of $Z^R_j$ is adjacent to $v_j$ via a red edge, every vertex of $Z^B_j$ is adjacent to $v_j$ via a blue edge, and there are blue perfect matchings between $Z^R_j$ and $Z^{RB}_j$ and between $Z^B_j$ and $Z^{BB}_j$. This completes the definition of $H_{2k+1}$. 

Now, let $\varphi$ be a homomorphism from $H_{2k+1}$ to $P_{2k+1}^A$ with the property that $\varphi(Y_j^-)=\{v_j\}$ and $\varphi(Y_j^+)=\{v_{j+1}\}$ for $0\leq j\leq 2k$. Note that, since every vertex of $P_{2k+1}^A$ is incident to at most one edge of each colour, this homomorphism is unique. Also, it is not hard to check that each vertex $v_j$ is covered exactly $c(v_j)+1$ times by $\varphi$ and each edge $v_jv_{j+1}$ is covered exactly $c(v_j,v_{j+1})+1$ times, where these quantities are defined in Lemmas~\ref{lem:vertexCovered} and~\ref{lem:edgeCovered}. By Lemmas~\ref{lem:vertexCovered} and~\ref{lem:edgeCovered}, we have that every vertex and edge is covered exactly $k(k+1)^2(2k+1)+1$ times. Therefore, \eqref{eq:PH} holds by Lemma~\ref{lem:entropy}. 

Our final task is to prove \eqref{eq:HP}. Let $G$ be any edge-coloured graph. Let $H_{2k+1}'$ be the edge-coloured graph obtained from $H_{2k+1}$ by deleting all vertices of $\bigcup_{j=0}^{2k+1}(X_j^R\cup X_j^B)$. Since $H_{2k+1}'$ is a subgraph of $H_{2k+1}$, the restriction of any homomorphism from $H_{2k+1}$ to $G$ to the set $V(H_{2k+1}')$ is a homomorphism from $H_{2k+1}'$ to $G$. For each $f\in\Hom(H_{2k+1}',G)$, let $\hom(H_{2k+1},G;f)$ be the number of homomorphisms from $H_{2k+1}$ to $G$ whose restriction to $V(H_{2k+1}')$ is $f$. Then
\begin{align*}\hom(H_{2k+1},G) &= \sum_{f\in \Hom(H_{2k+1}',G)}\hom(H_{2k+1},G;f)\\
&= \sum_{f\in \Hom(H_{2k+1}',G)}\prod_{j=0}^{2k+1}d_R(f(v_j))^{(k+1)x_j}d_B(f(v_j))^{kx_j}\\
&\leq \sum_{f\in \Hom(H_{2k+1}',G)}\prod_{j=0}^{2k+1}\left((k+1)^{k+1}k^k(2k+1)^{-2k-1}(n-1)^{2k+1}\right)^{x_j}\\
&= \prod_{j=0}^{2k+1}\left((k+1)^{k+1}k^k(2k+1)^{-2k-1}(n-1)^{2k+1}\right)^{x_j} \hom(H_{2k+1}',G)
\end{align*}
where the inequality in the penultimate step is a simple consequence of the fact that $d_R(u)+d_B(u)\leq n-1$ for every vertex $u$ of $G$; see the footnote\footref{calculus}. Thus, we have
\begin{equation}\label{eq:stripLeaves}t(H_{2k+1},G)\leq \left(k^{\frac{k}{2k+1}}(k+1)^{\frac{k+1}{2k+1}}(2k+1)^{-1}\right)^{e(H_{2k+1})-e(H_{2k+1}')} t(H_{2k+1}',G).\end{equation}

Next, let $H_{2k+1}''$ be obtained from $H_{2k+1}'$ by deleting all vertices of $\bigcup_{j=0}^{2k+1}(Z_j^R\cup Z_j^B\cup Z_j^{RB}\cup Z_j^{BB})$ for all $0\leq j\leq 2k+1$. Analogously to before, for each $f\in \Hom(H_{2k+1}'',G)$, let $\hom(H_{2k+1}',G;f)$ be the number of homomorphisms from $H_{2k+1}'$ to $G$ whose restriction to $V(H_{2k+1}'')$ is $f$. We let $d_{RB}(v)$ be the number of walks of length two in $G$ starting with a vertex $v$ in which the edges are red followed by blue, and define $d_{BB}(v)$ analogously but where both edges are blue. Then
\begin{align*}\hom(H_{2k+1}',G) &= \sum_{f\in \Hom(H_{2k+1}'',G)}\hom(H_{2k+1}',G;f)\\
&= \sum_{f\in \Hom(H_{2k+1}'',G)}\prod_{j=0}^{2k+1}d_{RB}(f(v_j))^{(k+1)z_j}d_{BB}(f(v_j))^{kz_j}.
\end{align*}
A simple double-counting argument tells us that $d_{RB}(v)+d_{BB}(v) \leq 2e_B(G) - d_B(v)$ where $e_B(G)$ denotes the number of blue edges of $G$. Thus, using the footnote\footref{calculus} again, $\hom(H_{2k+1}',G)$ is at most 
\[\prod_{j=0}^{2k+1}\left((k+1)^{k+1}k^k(2k+1)^{-2k-1}(2e_B(G))^{2k+1}\right)^{z_j} \hom(H_{2k+1}'',G).
\]
Let $H_{2k+1}'''$ be obtained from $H_{2k+1}''$ by adding $\sum_{j=0}^{2k+1}(2k+1)z_j$ isolated blue edges. Then, as an isolated blue edge has exactly $2e_B(G)$ homomorphisms to $G$, we have that $\hom(H_{2k+1}''',G) = (2e_B(G))^{\sum_{j=0}^{2k+1}(2k+1)z_j}\hom(H_{2k+1}'',G)$. Thus, the upper bound that we proved on $\hom(H_{2k+1}',G)$ can be rewritten as 
\[\hom(H_{2k+1}',G)\leq \prod_{j=0}^{2k+1}\left((k+1)^{k+1}k^k(2k+1)^{-2k-1}\right)^{z_j} \hom(H_{2k+1}''',G).\]
This easily implies that 
\begin{equation}\label{eq:cut2Paths}t(H_{2k+1}',G)\leq \left(k^{\frac{k}{2k+1}}(k+1)^{\frac{k+1}{2k+1}}(2k+1)^{-1}\right)^{e(H_{2k+1}')-e(H_{2k+1}''')} t(H_{2k+1}''',G).\end{equation}

Finally, we observe that $H_{2k+1}'''$ consists of the union of $P_{2k+1}^A$ and a bunch of isolated edges of each colour. We claim that the number of isolated red edges and the number of isolated blue edges are in a ratio of $k+1$ to $k$. To this end, we start by observing that, since $\varphi$ covers every edge of $P_{2k+1}^A$ the same number of times, and the number of red vs. blue edges of $P_{2k+1}^A$ are in a $k+1$ to $k$ ratio, the number of red vs. blue edges of $H_{2k+1}$ are in a $k+1$ to $k$ ratio. When changing from $H_{2k+1}$ to $H_{2k+1}'$, the number of red or blue edges deleted were also in a ratio of $k+1$ to $k$. Also, when transforming from $H_{2k+1}'$ to $H_{2k+1}'''$, the number of red edges lost is $\sum_{j=0}^{2k+1}|Z_j^R|=(k+1)\sum_{j=1}^{2k+1}z_j$ and the number of blue edges lost is $\sum_{j=0}^{2k+1}|Z_j^B|=k\sum_{j=1}^{2k+1}z_j$; so, they are again in a ratio of $k+1$ to $k$. Thus, the number of red edges vs. blue edges of $H_{2k+1}'''$ is in a ratio of $k+1$ to $k$ which, since $P_{2k+1}^A$ has this property, means that the number of isolated edges of $H_{2k+1}'''$ is in this ratio. Thus, if $m$ denotes the number of isolated red edges of $H_{2k+1}'''$, then, by the footnote\footref{calculus},
\begin{align*}\hom(H_{2k+1}''',G)& = (2e_R(G))^m (2e_B(G))^{\frac{k}{k+1}m}\hom(P_{2k+1}^A,G)\\
 & \leq \left((k+1)^{k+1}k^k(2k+1)^{-2k-1}(n(n-1))^{2k+1}\right)^{\frac{m}{k+1}}\hom(P_{2k+1}^A,G)\end{align*}
 which gives us that
 \begin{equation}\label{eq:isolatedEdges}t(H_{2k+1}''',G)\leq \left(k^{\frac{k}{2k+1}}(k+1)^{\frac{k+1}{2k+1}}(2k+1)^{-1}\right)^{e(H_{2k+1}''')-e(P_{2k+1}^A)} t(P_{2k+1}^A,G).\end{equation}
 The bound in \eqref{eq:HP} now follows by combining \eqref{eq:stripLeaves}, \eqref{eq:cut2Paths} and \eqref{eq:isolatedEdges}. This completes the proof of the lemma.
\end{proof}

In this section, we have described the vectors $\vec{x},\vec{y}$ and $\vec{z}$ and used them to construct the forest $H_{2k+1}$ satisfying Lemma~\ref{lem:key}. However, we have not addressed the way in which these arcane vectors were found. For any given value of $k$, the search for the vectors can be performed by running a simple linear program. That is, we set each of the entries of $\vec{x},\vec{y}$ and $\vec{z}$ to be a non-negative variable  and write down a list of constraints which implies that each vertex and edge of $P^A_{2k+1}$ is covered exactly once by the natural homomorphism of $H_{2k+1}$ to $P_{2k+1}^A$ and the sum of the variables is equal to one. Then any feasible solution to these constraints corresponds to a valid choice of the variables, except for the fact that our variables may be rational numbers as opposed to integers. However, scaling all of the variables by a multiple of the lowest common multiple of all of their denominators will give us integer values. The way that we found the vectors in Definition~\ref{defn:sequences} was by solving this linear program for various paths of length at most 40, or so, and then trying to find a pattern that works for all lengths. The fact that the solution is far from unique (see, e.g., Figure~\ref{fig:H3}) made it somewhat difficult to find a nice pattern; we needed to add various additional constraints to guide the linear programming solver to a solution that could be generalized to all odd path lengths.

\section{Conclusion}
\label{sec:conclusion}

While our main focus was on alternating paths, it seems clear that the approach taken in this paper can be strengthened further and applied to more general classes of edge-coloured trees and, more generally, forests. We did not pursue this direction as the case of alternating paths seemed most natural for us, and the construction was already quite involved. It would, however, be interesting to explore the class of edge-coloured trees $T$ with the property that $t(T,G)$ is bounded above by $e_R(T)^{e_R(T)}e_B(T)^{e_B(T)}(e_R(T)+e_B(T))^{-e_R(T)-e_B(T)}$ in greater generality. 

It would also be interesting to study such inequalities for edge-coloured graphs containing cycles. A lovely problem posed in~\cite[Problem~9.1]{Basit+25+} is to solve the semi-inducibility problem for alternating cycles of length $2\bmod 4$; the case of alternating cycles of length $0\bmod4$ is settled in~\cite[Theorem~1.5]{Basit+25+}. A flag algebra calculation shows that the answer for the alternating $6$-cycle is $(1/2)^6$.\footnote{Recall that we are working in the language of homomorphism density, not ``copies,'' and so all quantities that we discuss differ from those of~\cite{Basit+25+} by a constant factor depending on the number of symmetries of the cycle.} However, it would be challenging to extend the flag algebra approach to work for the $10$-cycle, and it is hard to imagine that it would be possible to generalize to arbitrarily long cycles. To reiterate~\cite[Problem~9.1]{Basit+25+}, it would be very interesting to settle the semi-inducubility problem for the alternating cycle of length $4k+2$ for all $k\geq1$. 

While the entropy method worked particularly well for alternating paths, there are other approaches which may fare better for different edge-coloured graphs. One example is the ``algebraic expansion'' trick that is frequently used in extremal problems in the area of graph limits; see, e.g.,~\cite{BehagueMorrisonNoel23+,Lovasz11,KimLee24,GrzesikLeeLidickyVolec22}. For example, this idea can be combined with known results on Sidorenko's Conjecture and weakly norming graphs to obtain short proofs of Theorem~\ref{thm:paths} in the case $k=1$ and a qualitative version of~\cite[Theorem~1.9]{Basit+25+}, as we show now. 

\begin{proof}[Alternative proof of Theorem~\ref{thm:paths} for $k=1$]
Let $G$ be an edge-coloured graph on $n$ vertices. It is clear that the maximum of $t(P_3^A,G)$ over all such $G$ is attained when $G$ is an edge-coloured clique, and so we can assume this. Let $R$ and $B$ be the $n$-vertex graphs consisting of the red and blue edges of $G$, respectively. Let $A_R$ and $A_B$ be their adjacency matrices. Then, since $A_R+A_B$ is equal to the all ones matrix minus the identity, we have that $\hom(P_3^A,G) + \hom(P_3,R)$ is equal to
\begin{align*}
\sum_{u,v,x,y\in V(G)}A_R(u,v)A_B(v,x)A_R(x,y) &+\sum_{u,v,x,y\in V(G)}A_R(u,v)A_R(v,x)A_R(x,y)\\
&= \sum_{u,v,x,y\in V(G)}A_R(u,v)(A_B(v,x)+A_R(v,x))A_R(x,y)\\
&= \sum_{u,v,x,y\in V(G)}A_R(u,v)A_R(x,y) - O(n^3)\\
&= \hom(K_2,R)^2- O(n^3).
\end{align*}
Therefore,
\[t(P_3^A,G) + t(P_3,R)=t(K_2,R)^2 -o(1)\]
and so
\[t(P_3^A,G) \leq t(K_2,R)^2-t(P_3,R)\]
which, since $P_3$ satisfies Sidorenko's Conjecture~\cite{Sidorenko93}, is at most $t(K_2,R)^2-t(K_2,R)^3$. The maximum of this over $0\leq t(K_2,R)\leq 1$ is precisely $(1/3)(2/3)^2$, which gives us the bound in Theorem~\ref{thm:paths} in the case $k=1$. 
\end{proof}

\begin{thm}[See Basit et al.~{\cite[Theorem~1.9]{Basit+25+}}]
If $H$ is the edge-coloured $4$-cycle with exactly one blue edge, then $t(H,G)\leq (1/4)(3/4)^3$ for every edge-coloured graph $G$. 
\end{thm}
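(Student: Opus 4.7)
The plan is to mimic the alternative proof of Theorem~\ref{thm:paths} for $k=1$ given just above: expand $\hom(H,G)$ algebraically, apply two Cauchy--Schwarz / Jensen-type inequalities, and optimise. I would first assume without loss of generality that $G$ is an edge-coloured clique on $n$ vertices, since extending $G$ by colouring missing edges blue only increases $\hom(H,G)$; then $A_B = J - I - A_R$. Expanding the fourfold sum defining $\hom(H,G)$ and dropping the non-negative term $\hom(C_3,R)$ gives
\[\hom(H,G) = \hom(P_3,R) - \hom(C_3,R) - \hom(C_4,R) \leq \hom(P_3,R) - \hom(C_4,R),\]
and hence $t(H,G) \leq t(P_3,R) - t(C_4,R)$.

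Next, I would apply the two homomorphism-density inequalities
\[t(P_3,R)^2 \leq t(P_2,R)\cdot t(C_4,R) \qquad \text{and} \qquad t(C_4,R) \geq t(P_2,R)^2.\]
Both follow from Jensen's inequality. For the first, write $t(P_3,R) = \mathbb{E}[p(X)w(X)]$, where $p(x) = \mathbb{E}_Y[R(x,Y)]$ and $w(x) = \mathbb{E}_Y[Q(x,Y)]$ with $Q(x,y) = \mathbb{E}_Z[R(x,Z)R(Z,y)]$; then Cauchy--Schwarz, together with the pointwise Jensen bound $\mathbb{E}[w^2] \leq \mathbb{E}[Q^2] = t(C_4,R)$, yields the claim. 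The second is the elementary inequality $\mathbb{E}[Q^2] \geq \mathbb{E}[Q]^2 = t(P_2,R)^2$, and can be viewed as a manifestation of $C_4$ being a weakly norming graph.

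Writing $s := t(P_2,R)$ and $c := t(C_4,R)$, these two inequalities combine to give $t(H,G) \leq \sqrt{sc} - c$ subject to $c \geq s^2$ and $s \in [0,1]$. A short one-variable calculation shows that when $s > 1/4$ the constraint $c \geq s^2$ binds, reducing the bound to $s^{3/2}(1 - \sqrt{s})$, which is maximised at $\sqrt{s} = 3/4$ with value $(3/4)^3(1/4) = 27/256$; for $s \leq 1/4$ the unconstrained optimum $c = s/4$ is feasible and gives at most $s/4 \leq 1/16 < 27/256$. Taken together, this yields $t(H,G) \leq (1/4)(3/4)^3$, matching the pseudorandom extremal example in which $R$ has red edge density $3/4$ (there $s = 9/16$, $c = 81/256$, and both inequalities above are equalities).

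The main obstacle is identifying this specific pair of inequalities. The lower bound $t(C_4,R) \geq t(P_2,R)^2$ is essential: it forces the optimum of $\sqrt{sc} - c$ into the quasirandom regime $c = s^2$ and produces the tight constant. Without it, the naive optimization of $\sqrt{sc} - c$ over $(s,c) \in [0,1]^2$ would give only $1/4$, which is far from the correct value $27/256$.
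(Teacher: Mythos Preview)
Your argument is correct and follows the same overall scheme as the paper: reduce to a clique, expand algebraically to obtain $t(H,G)\le t(P_3,R)-t(C_4,R)$, then finish with a homomorphism-density inequality and a calculus optimisation. The only substantive difference is in the last step. The paper invokes the single weakly-norming inequality $t(C_4,R)\ge t(P_3,R)^{4/3}$ and maximises the one-variable function $x-x^{4/3}$, whereas you split this into the pair $t(P_3,R)^2\le t(P_2,R)\,t(C_4,R)$ and $t(C_4,R)\ge t(P_2,R)^2$ and then carry out a two-variable case analysis. In fact your two inequalities combine directly to give the paper's: substituting $t(P_2,R)\le t(C_4,R)^{1/2}$ into the first yields $t(P_3,R)^2\le t(C_4,R)^{3/2}$, i.e.\ $t(C_4,R)\ge t(P_3,R)^{4/3}$. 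So your route is a more hands-on derivation of the same key bound, trading the black-box citation of ``$C_4$ is weakly norming'' for explicit Cauchy--Schwarz/Jensen steps, at the cost of a slightly longer optimisation. (As a minor bonus, your expansion tracks the $-\hom(C_3,R)$ term exactly rather than absorbing it into an $o(1)$.)
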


\begin{proof}
Again, let $G$ be an edge-coloured clique on $n$ vertices, and let $R$ and $B$ be the $n$-vertex graphs consisting of the red and blue edges of this colouring, respectively. Then, analogously to the previous proof, we have
\[t(H,G) + t(C_4,R)=t(P_3,R) -o(1)\]
and so
\[t(H,G) \leq t(P_3,R) -t(C_4,R).\]
The graph $C_4$ is ``weakly norming'' (see~\cite{Hatami10,ConlonLee17,ConlonLee24}). As a result, the inequality $t(C_4,R)\geq t(P_3,R)^{4/3}$ holds for any graph $R$; see~\cite{ConlonLee24} for more background on these types of ``domination inequalities.'' Thus, we have
\[t(H,G) \leq t(P_3,R) -t(P_3,R)^{4/3}.\]
The result now follows by maximizing this upper bound over $0\leq t(P_3,R)\leq 1$. 
\end{proof}

While the use of algebraic expansion and domination inequalities may be powerful in some restricted cases, the expressions that one obtains when there are multiple edges of both colours are often very complicated and challenging to analyze. For example, we were unable to extend this approach to prove Theorem~\ref{thm:paths} for any value of $k$ other than $k=1$. We even tried incorporating the ``Schur convexity'' ideas of~\cite{KimLee24} into this approach, with no avail. 

A particularly adventurous direction for future work could be to study the semi-inducibility problem for graphs with more than two edge colours. The method described in this paper can be adapted to this setting. For example, suppose that $T$ is a tree in which there are $r$ colours, every non-leaf vertex is incident with precisely $r$ edges, one of each colour, and the number of edges of each colour is the same. Letting $H$ be the tree obtained by adding a pendant edge of every colour incident to every non-leaf vertex, plus one isolated vertex, one can get inequalities relating $T$ and $H$ analogous to Lemma~\ref{lem:key}. This immediatly proves that the the answer to the semi-induciblity for $T$ is $(1/r)^{e(T)}$, which can be seen as a multicolour analogue of the result of~\cite[Theorem~1.3]{Basit+25+} on alternating paths of even length. It would be interesting to consider trees of this type without the constraint that there are the same number of edges of each colour. The smallest such tree is depicted in Figure~\ref{fig:3col} below. It would also be interesting to solve the semi-inducibility problem for alternating paths of arbitrary length with more than two colours. 

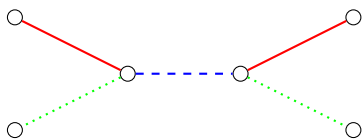
\begin{figure}[htbp]
\begin{center}
\begin{tikzpicture}[scale=1.5, every node/.style={draw, circle, fill=white, inner sep=2pt}]

\node (v0) at (0, 0.5) {};
\node (v1) at (1, 0) {};
\node (v2) at (2, 0) {};
\node (v3) at (3, 0.5) {};
\node (x11) at (0,-0.5){};
\node (x21) at (3,-0.5){};

\draw[red, thick] (v0) -- (v1); 
\draw[blue, dashed, thick] (v1) -- (v2); 
\draw[red, thick] (v2) -- (v3); 

\draw[green, dotted, thick] (v1) -- (x11); 
\draw[green, dotted, thick] (v2) -- (x21); 
\end{tikzpicture}
\end{center}
    \caption{A tree with edges of three colours.}
    \label{fig:3col}
\end{figure}

\begin{ack}
We learned about the semi-inducibility problem through remotely attending a talk given by Bertille Granet as part of the \emph{Connections Workshop: Extremal Combinatorics} held at SL Math in February, 2025. We thank SL Math for making the talks accessible for remote viewing and the third author thanks Bertille for helpful discussions over e-mail. 
\end{ack}

\bibliographystyle{plain}
\bibliography{SemiInducibility}

\begin{thebibliography}{10}

\bibitem{AlonSpencer}
N.~Alon and J.~H. Spencer.
\newblock {\em The probabilistic method}.
\newblock Wiley Series in Discrete Mathematics and Optimization. John Wiley \& Sons, Inc., Hoboken, NJ, fourth edition, 2016.

\bibitem{Basit+25+}
A.~Basit, B.~Granet, D.~Horsley, A.~K\"undgen, and K.~Staden.
\newblock The semi-inducibility problem.
\newblock E-print arXiv:2501.09842v1, 2025.

\bibitem{BehagueCrudeleNoelSimbaqueba+25+}
N.~Behague, G.~Crudele, J.~A. Noel, and L.~Simbaqueba.
\newblock {S}idorenko-type inequalities for pairs of trees.
\newblock E-print arXiv:2305.16542v2, 2025.

\bibitem{BehagueMorrisonNoel23+}
N.~Behague, N.~Morrison, and J.~A. Noel.
\newblock Common pairs of graphs.
\newblock E-print arXiv:2208.02045v3, Accepted to \emph{Combin. Probab. Comput.}, 2023.

\bibitem{BehagueMorrisonNoel24}
N.~Behague, N.~Morrison, and J.~A. Noel.
\newblock Off-diagonal commonality of graphs via entropy.
\newblock {\em SIAM J. Discrete Math.}, 38(3):2335--2360, 2024.

\bibitem{BlekhermanRaymond22}
G.~Blekherman and A.~Raymond.
\newblock A path forward: tropicalization in extremal combinatorics.
\newblock {\em Adv. Math.}, 407:Paper No. 108561, 68, 2022.

\bibitem{BlekhermanRaymond23}
G.~Blekherman and A.~Raymond.
\newblock A new proof of the {E}rd{\H o}s-{S}imonovits conjecture on walks.
\newblock {\em Graphs Combin.}, 39(3):Paper No. 53, 8, 2023.

\bibitem{ChaoYu24}
T.-W. Chao and H.-H.~H. Yu.
\newblock When entropy meets {T}ur\'an: new proofs and hypergraph {T}ur\'an results.
\newblock E-print arXiv:2412.08075v2, 2024.

\bibitem{ConlonKimLeeLee18}
D.~Conlon, J.~H. Kim, C.~Lee, and J.~Lee.
\newblock Some advances on {S}idorenko's conjecture.
\newblock {\em J. Lond. Math. Soc. (2)}, 98(3):593--608, 2018.

\bibitem{ConlonLee17}
D.~Conlon and J.~Lee.
\newblock Finite reflection groups and graph norms.
\newblock {\em Adv. Math.}, 315:130--165, 2017.

\bibitem{ConlonLee24}
D.~Conlon and J.~Lee.
\newblock Domination inequalities and dominating graphs.
\newblock {\em Math. Proc. Cambridge Philos. Soc.}, 177(1):167--184, 2024.

\bibitem{GrzesikLeeLidickyVolec22}
A.~Grzesik, J.~Lee, B.~Lidick\'{y}, and J.~Volec.
\newblock On tripartite common graphs.
\newblock {\em Combin. Probab. Comput.}, 31(5):907--923, 2022.

\bibitem{Hatami10}
H.~Hatami.
\newblock Graph norms and {S}idorenko's conjecture.
\newblock {\em Israel J. Math.}, 175:125--150, 2010.

\bibitem{KimLee24}
J.~S. Kim and J.~Lee.
\newblock Extended commonality of paths and cycles via {S}chur convexity.
\newblock {\em J. Combin. Theory Ser. B}, 166:109--122, 2024.

\bibitem{KoppartyRossman11}
S.~Kopparty and B.~Rossman.
\newblock The homomorphism domination exponent.
\newblock {\em European J. Combin.}, 32(7):1097--1114, 2011.

\bibitem{Lee21}
J.~Lee.
\newblock On some graph densities in locally dense graphs.
\newblock {\em Random Structures Algorithms}, 58(2):322--344, 2021.

\bibitem{Lovasz11}
L.~Lov\'{a}sz.
\newblock Subgraph densities in signed graphons and the local {S}imonovits-{S}idorenko conjecture.
\newblock {\em Electron. J. Combin.}, 18(1):Paper 127, 21, 2011.

\bibitem{Sidorenko93}
A.~F. Sidorenko.
\newblock A correlation inequality for bipartite graphs.
\newblock {\em Graphs Combin.}, 9(2):201--204, 1993.

\bibitem{Szegedy15}
B.~Szegedy.
\newblock An information theoretic approach to {S}idorenko's conjecture.
\newblock E-print arXiv:1406.6738v3, 2015.

\end{thebibliography}

\appendix

\section{Verification of the Covering Conditions}

The purpose of this appendix is to carry out the calculations required to prove Lemmas~\ref{lem:vertexCovered} and~\ref{lem:edgeCovered}; we start with the former. 

\begin{proof}[Proof of Lemma~\ref{lem:vertexCovered}]
Note that, by the symmetry of the construction in Definition~\ref{defn:sequences}, we need only consider vertices with index between $0$ and $k$. The proof is divided into cases based on the parity of $k$, handling small values separately.

\begin{case}
$k=1$. 
\end{case}
We have
\[c(v_0)=0+2\cdot 4+0+2 \cdot 2+ 0+0+0+0=12=k(k+1)^2(2k+1),\]
verifying \ref{eq:v0} and we have
\begin{align*}c(v_1) = 0+1\cdot 4+2 \cdot 2+2\cdot 2+0+0+0+0=12=k(k+1)^2(2k+1)\end{align*}
verifying \ref{eq:v1} in this case.

\begin{case}
$k=2$
\end{case}
We have 
\[c(v_0)=0+3\cdot 16+0+3\cdot 14+0+0+0+0=90=k(k+1)^2(2k+1)\]
and
\[c(v_2)=2\cdot16+3\cdot 10+3\cdot 1+0+0+0+2 \cdot 5 +3 \cdot 5
=90=k(k+1)^2(2k+1),\]
verifying \ref{eq:v0} and we have
\[c(v_1)=0+2\cdot 10+3\cdot 14+3\cdot 1+0+0+2\cdot 5 + 3 \cdot 5=90=k(k+1)^2(2k+1),\]
verifying \ref{eq:v1} in this case.

\begin{case}
$k=3$ 
\end{case}
We have
\[c(v_0)= 0+4\cdot 48+0+4+0+0+0+0\cdot 36=336=k(k+1)^2(2k+1)\]
and
\[c(v_2)=3\cdot 48+4\cdot 28+4\cdot 4+4 \cdot 2+0+0+0+4\cdot 14=336=k(k+1)^2(2k+1),\]
verifying \ref{eq:v0} in this case. Further, we have
\[c(v_1)=0+3\cdot 40+4\cdot 36+ 4\cdot 4+0+0+4\cdot 14=336=k(k+1)^2(2k+1)\]
and 
\[c(v_3)=4\cdot 40+3\cdot 28+4\cdot 2+0+0+3\cdot 14 + 3\cdot 14+0 =336=k(k+1)^2(2k+1)\]
verifying \ref{eq:v1} in this case. 
\begin{case}
$k$ is odd and $k\geq 5$. 
\end{case}

We start by proving \ref{eq:v0}. First, consider the case $j=0$. In this case,  we have
\[c(v_0)= 0+(k+1)^3k+0 + k^2(k+1)^2+0+0+0+0=k(k+1)^2(2k+1).\]
Next, suppose that $2j=4i$ for some $1\leq i\leq \left\lfloor\frac{k-5}{4}\right\rfloor$. In this case,
\begin{align*}
c(v_{2j}) &= k(k+1)(k^2-k+i-1) + (k+1)^2(k^2+k-i)+0+(k+1)i+0\\
&+k(2k+1)(k-i+1)+k(2k+1)i+0 = k(k+1)^2(2k+1).
\end{align*}
Now, suppose that $2j=4i+2$ for some $0\leq i\leq \left\lfloor\frac{k-5}{4}\right\rfloor$. We have 
\begin{align*}
c(v_{2j}) &=  k(k+1)(k^2+k-i) + (k+1)^2(k^2-k+i)+0+(k+1)(k-i)\\
&+(k+1)(2k+1)i+0+0+(k+1)(2k+1)(k-i)= k(k+1)^2(2k+1).
\end{align*}
To complete the proof of \ref{eq:v0}, it suffices to consider vertices of the form $v_{2j}$ where $4\left\lfloor \frac{k-1}{4}\right\rfloor\leq 2j\leq k$. 
First, consider the case $k\equiv 1\bmod 4$ and $k\geq5$. In this case, the only remaining index to check is $2j=k-1$; we have
\begin{align*}
c(v_{k-1}) =& k(k+1)\left(k^2-k+\frac{k-5}{4}\right)+(k+1)\frac{4k^3+7k^2+4k+1}{4}+0+(k+1)\frac{k-1}{4}+0\\
+&k(2k+1)\left(k-\frac{k-5}{4}\right)+k\left(\frac{(k-1)(2k+1)}{4}\right)+0= k(k+1)^2(2k+1).
\end{align*}
Next, consider the case $k\equiv 3\bmod 4$ and $k\geq7$. In this case, the remaining indices to check are $2j=k-3$ and $2j=k-1$; we have
\begin{align*}c(v_{k-3})&=k (k+1)\left(k^2-k+\frac{k-7}{4}\right)+(k+1)\frac{4k^3+7k^2+6k+3}{4}+0+(k+1)\frac{k-3}{4}+0\\
&+k(2k+1)\left(k-\frac{k-7}{4}\right)+k(2k+1)\frac{k-3}{4}+0=k(k+1)^2(2k+1)
\end{align*}
and
\begin{align*}
c(v_{k-1})&=k\frac{4k^3+7k^2+6k+3}{4}+(k+1)\frac{2k^3+k^2-2k-1}{2}+(k+1)\frac{(k+1)^2}{4}+(k+1)\frac{k+1}{2}\\
&+(k+1)(2k+1)\frac{k-3}{4}+0+0+(k+1)\frac{2k^2+3k+1}{2}=k(k+1)^2(2k+1).
\end{align*}
This completes the proof of \ref{eq:v0} in the case that $k$ is odd and $k\geq 5$. Next, we focus on \ref{eq:v1}. Suppose first that $j=0$. Then
\[c(v_{1})=0+k(k+1)k^2+(k+1)k^2(k+1)+0+0+0+k(2k+1)k+k(2k+1)=k(k+1)^2(2k+1).\]
Next, suppose that $2j+1=4i+1$ for some $1\leq i\leq \left\lfloor\frac{k-5}{4}\right\rfloor$. In this case,
\begin{align*}
c(v_{2j+1}) =& (k+1)^2(k^2-i)+k(k+1)(k^2+i)+(k+1)i+0+(k+1)(2k+1)i\\
+&0+0+(k+1)(2k+1)(k-i)= k(k+1)^2(2k+1).
\end{align*}
Now, suppose that $2j+1=4i+3$ for some $0\leq i\leq \left\lfloor\frac{k-5}{4}\right\rfloor$. We have 
\begin{align*}
c(v_{2j+1}) =& (k+1)(k+1)(k^2+i)+k(k+1)(k^2-i-1)+(k+1)(k-i)+0+0  \\
+&k(2k+1)(k-i)+k(2k+1)(i+1)+0= k(k+1)^2(2k+1).
\end{align*}
To complete the proof of \ref{eq:v1}, it suffices to consider vertices of the form $v_{2j+1}$ where $4\left\lfloor \frac{k-5}{4}\right\rfloor+3< 2j+1\leq k$. 
First, suppose that $k\equiv1\bmod 4$ and $k\geq5$. We only need to consider the case $2j+1=k$.
\begin{align*}
c(v_{k}) =&(k+1)\left(\frac{4k^3+3k^2+1}{4}\right)+k\frac{4k^3+7k^2+4k+1}{4}+(k+1)\frac{k-1}{4} +(k+1)\frac{(k+1)^2}{2}\\
+&(k+1)\frac{(k-1)(2k+1)}{4}+0+0+(k+1)\frac{(k-1)(2k+1)}{4}= k(k+1)^2(2k+1).
\end{align*}
Now, suppose that $k\equiv3\bmod 4$ and $k\geq7$. We need to consider the cases $2j+1=k-2$ and $2j+1=k$. We have
\begin{align*}
c(v_{k-2}) &=(k+1)\frac{4k^3+3k^2+2k+3}{4}+k\frac{2k^3+3k^2-1}{2}+(k+1)\frac{k-3}{4}+(k+1)\frac{(k+1)^2}{4}\\
&+(k+1)(2k+1)\frac{k-3}{4}+0+0+(k+1)\frac{2k^2+3k+1}{2}=k(k+1)^2(2k+1),
\end{align*}
and
\begin{align*}
c(v_{k}) &=(k+1)\frac{2k^3+3k^2-1}{2}+k\frac{2k^3+k^2-2k-1}{2}+(k+1)\frac{k+1}{2}+0+0\\
&+k\frac{2k^2+3k+1}{2}+k\frac{2k^2+3k+1}{2}+0=k(k+1)^2(2k+1).
\end{align*}

\begin{case}
$k$ is even and $k\geq 4$.
\end{case}
We start by proving \ref{eq:v0}. First, consider the case $j=0$. In this case,  we have
\[c(v_0)= 0+(k+1)k(k+1)^2+0+(k+1)k^2(k+1)+0+0+0+0=k(k+1)^2(2k+1).\]
Next, suppose that $2j=4i$ for some $1\leq i\leq \left\lfloor\frac{k-4}{4}\right\rfloor$. In this case,
\begin{align*}
c(v_{2j}) &= k^2(k^2+i-1)+(k+1)k((k+1)^2-i)+(k+1)(k+1-i)+0\\
&+(k+1)(2k+1)(i-1)+k(2k+1)(k+1-i)+0+0=k(k+1)^2(2k+1).
\end{align*}
Now, suppose that $2j=4i+2$ for some $0\leq i\leq \left\lfloor\frac{k-4}{4}\right\rfloor$. We have 
\begin{align*}
c(v_{2j}) &=k^2((k+1)^2-i)+(k+1)k(k^2+i)+(k+1)i+0+0+0\\
&+k(2k+1)i+(k+1)(2k+1)(k-i)=k(k+1)^2(2k+1).
\end{align*}
To complete the proof of \ref{eq:v0}, it suffices to consider vertices of the form $v_{2j}$ where $4\left\lfloor \frac{k-4}{4}\right\rfloor+2< 2j\leq k$. First, suppose that $k\equiv0\bmod 4$ and $k\geq4$. We only need to consider the case $2j=k$.
\begin{align*}
c(v_{k}) &= k^2\left(k^2+\frac{k}{4}-1\right)+(k+1)k^2\left(k+\frac{5}{4}\right)+(k+1)\left(\frac{3k}{4}+1\right)+(k+1)\frac{1}{2}(k+2)k\\
&+(k+1)(2k+1)\left(\frac{k}{4}-1\right)+k(2k+1)\left(\frac{3k}{4}+1\right)+0+0=k(k+1)^2(2k+1).
\end{align*}
Now, suppose that $k\equiv2\bmod 4$ and $k\geq6$. We need to consider the cases $2j=k-2$ and $2j=k$. We have
\begin{align*}
c(v_{k-2}) &=k^2\left(k^2+\frac{k}{4}-\frac{3}{2}\right)+(k+1)\frac{(2k^2+3k+2)k}{2}+(k+1)\left(\frac{3k}{4}+\frac{3}{2}\right)+(k+1)\frac{(k+2)k}{4}\\
&+(k+1)(2k+1)\left(\frac{k}{4}-\frac{3}{2}\right)+k(2k+1)\left(\frac{3k}{4}+\frac{3}{2}\right)+0+0=k(k+1)^2(2k+1) 
\end{align*}
and 
\begin{align*}
c(v_{k}) &=k\frac{k(2k^2+3k+2)}{2}+(k+1)\frac{k^2(2k+1)}{2}+(k+1)\frac{k}{2}+0+0+0\\
&+k\frac{k(2k+1)}{2}+(k+1)\frac{(2k+1)k}{2}=k(k+1)^2(2k+1).
\end{align*}
This completes the proof of \ref{eq:v0}. Next, we focus on \ref{eq:v1}. Suppose first that $j=0$. Then
\[c(v_{1})=0+k^2(k^2+k)+(k+1)k^2(k+1)+0+0+0+0+(k+1)(2k+1)k=k(k+1)^2(2k+1).\]
Next, suppose that $2j+1=4i+1$ for some $1\leq i\leq \left\lfloor\frac{k-4}{4}\right\rfloor$. In this case,
\begin{align*}
c(v_{2j+1}) &=(k+1)k(k^2+k+i)+k^2(k^2+k-i)+0+(k+1)i\\
&+0+0+k(2k+1)i+(k+1)(2k+1)(k-i)=k(k+1)^2(2k+1).
\end{align*}
Now, suppose that $2j+1=4i+3$ for some $0\leq i\leq \left\lfloor\frac{k-4}{4}\right\rfloor$. We have 
\begin{align*}
c(v_{2j+1}) &=(k+1)k(k^2+k-i)+k^2(k^2+k+i+1)+0+(k+1)(k-i)\\
&+(k+1)(2k+1)i+k(2k+1)(k-i)+0+0=k(k+1)^2(2k+1)
\end{align*}
To complete the proof of \ref{eq:v1}, it suffices to consider vertices of the form $v_{2j+1}$ where $4\left\lfloor \frac{k-4}{4}\right\rfloor+3< 2j+1\leq k$. 
If $k\equiv0\bmod 4$ and $k\geq4$, then there is no case to check. Now, suppose that $k\equiv2\bmod 4$ and $k\geq6$. We need to consider the case $2j+1=k-1$. We have
\begin{align*}
c(v_{k-1}) &=(k+1)\frac{(4k^2+5k-2)k}{4}+k\frac{k^2(2k+1)}{2}+(k+1)\frac{(k+2)k}{4}+(k+1)\frac{k}{2}+0\\
&+0+k\frac{(2k+1)k}{2}+(k+1)\frac{(2k+1)k}{2}=k(k+1)^2(2k+1).
\end{align*}
This completes the proof of Lemma~\ref{lem:vertexCovered}.
\end{proof}

Finally, we prove Lemma~\ref{lem:edgeCovered}.

\begin{proof}[Proof of Lemma~\ref{lem:edgeCovered}]
Note that, by the symmetry of the construction in Definition~\ref{defn:sequences}, we need only consider edges in which the minimum index of a vertex contained in the edge is between $0$ and $k$. The proof is divided into cases based on the parity of $k$, handling small values of $k$ separately.
\begin{casee}
$k=1$. 
\end{casee}
We have
\begin{align*}c(v_{0},v_{1})= 0+2 \cdot 4+2\cdot 2+ 0+0=12=k(k+1)^2(2k+1),
\end{align*}
verifying \ref{eq:e0} and we have
\begin{align*}c(v_{1},v_{2})= 1\cdot 4+1 \cdot 4+2\cdot 2+0+0+0+0=12=k(k+1)^2(2k+1),
\end{align*}
verifying \ref{eq:e1}.
\begin{casee}
$k=2$. 
\end{casee}
We have
\begin{align*}c(v_{0},v_{1})= 0+ 3 \cdot 16+3 \cdot 14 +0+0=90=k(k+1)^2(2k+1)
\end{align*}
and 
\begin{align*}c(v_{2},v_{3})=3 \cdot 10 +3 \cdot 10 + 0 +3 \cdot 5 + 3 \cdot 5 =90=k(k+1)^2(2k+1), \end{align*}
verifying \ref{eq:e0}. Further, we have
\begin{align*}c(v_{1},v_{2})=2 \cdot 16 + 2 \cdot 10+3 \cdot 1 + 0 + 0+ 2 \cdot 2 \cdot 5 + 3 \cdot 5=90=k(k+1)(2k+1), \end{align*}
verifying \ref{eq:e1} in the case $k=2$.

\begin{casee}
$k=3$.  
\end{casee}
We have
\begin{align*}c(v_{0},v_{1}) = 0+4 \cdot 48+ 4 \cdot 36+0+0 =336=k(k+1)^2(2k+1)\end{align*}
and
\begin{align*}c(v_{2},v_{3}) =4 \cdot 40+4 \cdot 28+4 \cdot 2+0+4 \cdot 14=336=k(k+1)^2(2k+1),
\end{align*}
verifying \ref{eq:e0}. Further, we have
\begin{align*}
c(v_{1},v_{2}) =& 3 \cdot 48+3 \cdot 40+4 \cdot 4+0+0+0+4 \cdot 14=336= k(k+1)^2(2k+1)
\end{align*}
and
\begin{align*}
c(v_{3},v_{4}) =& 3\cdot 28+3 \cdot 28 +0+0+2 \cdot 3 \cdot 14 + 2 \cdot 3 \cdot 14+0=336=
k(k+1)^2(2k+1),
\end{align*}
verifying \ref{eq:e1} in the case $k=3$.
\begin{casee}
$k$ is odd and $k\geq 5$. 
\end{casee}
We start by proving \ref{eq:e0}. First, consider the case $j=0$. In this case,
\[c(v_{0},v_{1}):=0 + (k+1)^2(k(k+1)) + (k+1)k^2(k+1) + 0 + 0=k(k+1)^2(2k+1).\]
Next, suppose that $2j=4i$ for some $1\leq i\leq \left\lfloor\frac{k-5}{4}\right\rfloor$. We have
\begin{align*}
c(v_{2j},v_{2j+1}) =& (k+1)(k+1)(k^2-i) + (k+1)(k+1)(k(k+1)-i) + (k+1)i\\
  +& (k+1)(2k+1)i + 0= k(k+1)^2(2k+1).
\end{align*}
Suppose that $2j=4i+2$ for some $0\leq i\leq \left\lfloor\frac{k-5}{4}\right\rfloor$. Then
\begin{align*}
c(v_{2j},v_{2j+1}) =& (k+1)(k+1)(k^2+i) + (k+1)(k+1)(k^2-k+i) + (k+1)(k-i)\\
   +& 0 + (k+1)(2k+1)(k-i)= k(k+1)^2(2k+1).
\end{align*}
To complete the proof of \ref{eq:e0}, it suffices to consider the edges of the form $v_{2j}v_{2j+1}$ where $4\left\lfloor\frac{k-1}{4}\right\rfloor\leq 2j\leq k$. 
First, suppose that $k\equiv1\bmod 4$ and $k\geq5$. In this case, all that remains is to consider $2j=k-1$. We have
\begin{align*}
c(v_{k-1},v_{k}) &= (k+1)\left(\frac{4k^3+3k^2+1}{4}\right) + (k+1)\frac{4k^3+7k^2+4k+1}{4}
 + (k+1)\frac{k-1}{4}& \\
 &+ (k+1)\left(\frac{(k-1)(2k+1)}{4}\right) + 0=k(k+1)^2(2k+1).
\end{align*}
Now, suppose that $k\equiv3\bmod 4$ and $k\geq7$. In this case, all that remains is to consider $2j=k-3$ and $2j=k-1$. We have
\begin{align*}
c(v_{k-3},v_{k-2}) &=(k+1) \frac{4k^3+3k^2+2k+3}{4}+(k+1)\frac{4k^3+7k^2+6k+3}{4}+(k+1)\frac{k-3}{4}\\
&+(k+1)(2k+1)\frac{k-3}{4}+0=k(k+1)^2(2k+1)
\end{align*}
and
\begin{align*}
c(v_{k-1},v_{k}) &=(k+1)\frac{2k^3+3k^2-1}{2}+(k+1)\frac{2k^3+k^2-2k-1}{2}+(k+1)\frac{k+1}{2}+0\\
&+(k+1)\frac{2k^2+3k+1}{2}=k(k+1)^2(2k+1).
\end{align*}
We now focus on \ref{eq:e1}. Suppose that $2j+1=4i+1$ for some $0\leq i\leq \left\lfloor\frac{k-5}{4}\right\rfloor$. We have
\begin{align*}
c(v_{2j+1},v_{2j+2}) =& k(k+1)(k^2+k-i) + k(k+1)(k^2+i) + 0 + (k+1)(2k+1)i\\
   +& 0 + 0+(k+1)(2k+1)(k-i)= k(k+1)^2(2k+1).
\end{align*}
Suppose now that $2j+1=4i+3$ for some $0\leq i\leq \left\lfloor\frac{k-5}{4}\right\rfloor$. Then
\begin{align*}
c(v_{2j+1},v_{2j+2}) =& k(k+1)(k^2-k+i) + k(k+1)(k^2-i-1) + 0 + 0 + 2k(2k+1)(k-i) \\
  +& 2k(2k+1)(i+1) + 0 = k(k+1)^2(2k+1).
\end{align*}
To complete the proof of \ref{eq:e1}, it suffices to consider the edges of the form $v_{2j+1}v_{2j+2}$ where $4\left\lfloor\frac{k-5}{4}\right\rfloor+3< 2j+1\leq k$.
Suppose that $k\equiv1\bmod 4$ and $k\geq5$. In this case, we need to consider $2j+1=k$. We have 
\begin{align*}
c(v_{k},v_{k+1}) =& k\frac{4k^3+7k^2+4k+1}{4}+k\frac{4k^3+7k^2+4k+1}{4}+(k+1)\frac{(k+1)^2}{2}\\
+&(k+1)\frac{(k-1)(2k+1)}{4}+0+0+(k+1)\frac{(k-1)(2k+1)}{4}=k(k+1)^2(2k+1).
\end{align*}
Suppose that $k\equiv3\bmod 4$ and $k\geq7$. In this case, we need to consider $2j+1\in\{k-2,k\}$. We have  
\begin{align*}
c(v_{k},v_{k+1}) &= k \frac{2k^3+k^2-2k-1}{2}+k\frac{2k^3+k^2-2k-1}{2} + 0+0\\
&+2k\frac{2k^2+3k+1}{2}+2k\frac{2k^2+3k+1}{2}+0 =k(k+1)^2(2k+1).
\end{align*}

\begin{casee}
$k$ is even and $k\geq 4$. 
\end{casee}
We start by proving \ref{eq:e0}. First, consider the case $j=0$. In this case,
\[c(v_{0},v_{1}):=0 + (k+1)^2(k(k+1)) + (k+1)k^2(k+1) + 0 + 0=k(k+1)^2(2k+1).\]
Next, suppose that $2j=4i$ for some $1\leq i\leq \left\lfloor\frac{k-4}{4}\right\rfloor$. We have
\begin{align*}
c(v_{2j},v_{2j+1}) =(k+1)k(k^2+k+i) + (k+1)k((k+1)^2-i)+0+0+0=k(k+1)^2(2k+1).
\end{align*}
Suppose that $2j=4i+2$ for some $0\leq i\leq \left\lfloor\frac{k-4}{4}\right\rfloor$. Then
\begin{align*}
c(v_{2j},v_{2j+1}) &= (k+1)k(k^2+k-i) + (k+1)k(k^2+i) + 0\\
&+(k+1)(2k+1)i+(k+1)(2k+1)(k-i)=k(k+1)^2(2k+1).
\end{align*}
To complete the proof of \ref{eq:e0}, it suffices to consider the edges of the form $v_{2j}v_{2j+1}$ where $4\left\lfloor\frac{k}{4}\right\rfloor\leq 2j\leq k$. 
First, suppose that $k\equiv0\bmod 4$ and $k\geq4$. In this case, all that remains is to consider $2j=k$. We have
\begin{align*}
c(v_{k},v_{k+1}) &=(k+1)k^2\left(k+\frac{5}{4}\right)+(k+1)k^2\left(k+\frac{5}{4}\right)\\
&+(k+1)\frac{(k+2)k}{2}+0+0=k(k+1)^2(2k+1).
\end{align*}
Now, suppose that $k\equiv2\bmod 4$ and $k\geq6$. In this case, all that remains is to consider $2j=k-2$ and $2j=k$. We have
\begin{align*}
c(v_{k-2},v_{k-1}) &=(k+1)\frac{k(4k^2+5k-2)}{4}+(k+1)\frac{k(2k^2+3k+2)}{2}\\
&+(k+1)\frac{(k+2)k}{4}+0+0=k(k+1)^2(2k+1).
\end{align*}
and
\begin{align*}
c(v_{k},v_{k+1}) &=(k+1)\frac{k^2(2k+1)}{2}+(k+1)\frac{k^2(2k+1)}{2}+0\\
&+(k+1)\frac{(2k+1)k}{2}+(k+1)\frac{(2k+1)k}{2}=k(k+1)^2(2k+1).
\end{align*}

We now focus on \ref{eq:e1}. Suppose that $2j+1=4i+1$ for some $0\leq i\leq \left\lfloor\frac{k-4}{4}\right\rfloor$. We have
\begin{align*}
c(v_{2j+1},v_{2j+2}) &= k^2((k+1)^2-i) + k^2(k^2+k-i) + 
 (k+1)i+0+0\\
 &+2k(2k+1)i+(k+1)(2k+1)(k-i)=k(k+1)^2(2k+1).
\end{align*}
Suppose now that $2j+1=4i+3$ for some $0\leq i\leq \left\lfloor\frac{k-4}{4}\right\rfloor$. Then
\begin{align*}
c(v_{2j+1},v_{2j+2}) &= k^2(k^2+i) + k^2(k^2+k+i+1) + (k+1)(k-i)+ (k+1)(2k+1)i\\
&+2k(2k+1)(k-i)+0+0 = k(k+1)^2(2k+1).
\end{align*}
To complete the proof of \ref{eq:e1}, it suffices to consider the edges of the form $v_{2j+1}v_{2j+2}$ where $4\left\lfloor\frac{k-4}{4}\right\rfloor+3< 2j+1\leq k$.
In the case $k\equiv0\bmod 4$ and $k\geq4$, there is nothing to confirm.
Finally, suppose that $k\equiv2\bmod 4$ and $k\geq6$. In this case, we need to consider $2j+1=k-1$. We have  
\begin{align*}
c(v_{k-1},v_{k}) 
&=k\frac{(2k^2+3k+2)k}{2}+k\frac{k^2(2k+1)}{2}+(k+1)\frac{k}{2}+0+0\\
&+2k\frac{(2k+1)k}{2}+(k+1)\frac{(2k+1)k}{2}=k(k+1)^2(2k+1),
\end{align*}
completing the proof of Lemma~\ref{lem:edgeCovered}.
\end{proof}

\end{document}